\numberwithin{equation}{section}
   \newtheorem{theorema}{Theorem}
   \newtheorem{theorem}{Theorem}[section]
    \newtheorem{lemma}[theorem]{Lemma}
   \newtheorem{proposition}[theorem]{Proposition}
   \newtheorem{rem}[theorem]{Remark}
\renewcommand{\tilde}{\widetilde}
\newcommand{\tf}{\textsc{f}}
\newcommand{\cc}{\complement}
\newcommand{\lint}{\llbracket}
\newcommand{\rint}{\rrbracket}
\newcommand{\cA}{\ensuremath{\mathcal A}} 
\newcommand{\cB}{\ensuremath{\mathcal B}}
\newcommand{\cI}{\ensuremath{\mathcal I}}
\newcommand{\bbE}{{\ensuremath{\mathbb E}} }
\newcommand{\bbN}{{\ensuremath{\mathbb N}} } 
\newcommand{\bbP}{{\ensuremath{\mathbb P}} } 
\newcommand{\bbR}{{\ensuremath{\mathbb R}} }
\newcommand{\var}{{\rm Var} }
\newcommand{\ga}{\alpha}
\newcommand{\gb}{\beta}
\newcommand{\gd}{\delta}
\newcommand{\gep}{\varepsilon}       
\newcommand{\gG}{\Gamma}
\newcommand{\go}{\omega}
\newcommand{\bP}{{\ensuremath{\mathbf P}} }
\newcommand{\bE}{{\ensuremath{\mathbf E}} }
\newcommand{\bbbP}{\mathbb{P}}
\newcommand{\E}{\mathbb{E}}
\newcommand{\N}{\mathbb{N}}
\newcommand{\ind}{\mathbf{1}}
\renewcommand{\cc}{\complement}
\DeclareMathSymbol{\leqslant}{\mathalpha}{AMSa}{"36} 
\DeclareMathSymbol{\geqslant}{\mathalpha}{AMSa}{"3E} 
\DeclareMathSymbol{\eset}{\mathalpha}{AMSb}{"3F}     
\renewcommand{\leq}{\;\leqslant\;}                   
\renewcommand{\geq}{\;\geqslant\;}                   
\newcommand{\dd}{\,\text{\rm d}}             
\DeclareMathOperator*{\union}{\bigcup}       
\newcommand{\sumtwo}[2]{\sum_{\substack{#1 \\ #2}}} 
\title[Marginal $\gamma$-stable pinning model]{Marginal relevance for the $\gamma$-stable pinning model.}
\author{Hubert Lacoin}
\address{
  IMPA, Institudo de Matem\'atica Pura e Aplicada, Estrada Dona Castorina 110
Rio de Janeiro, CEP-22460-320, Brasil. 
}
\begin{document}

\begin{abstract}
We investigate disorder relevance for the pinning of a renewal 
when the law of the random environment is in the domain of attraction of a stable law with parameter $\gamma \in (1,2)$.
Assuming that the renewal jumps have power-law decay, we
determine under which condition the critical point of the system modified by the introduction of a small quantity of disorder.
In an earlier study of the problem \cite{cf:LS} we have shown that the answer depends on the value of the tail exponent $\alpha$ associated to the distribution of 
renewal jumps:
when $\alpha>1-\gamma^{-1}$ a small amount of disorder shifts the critical point whereas it does not when $\alpha<1-\gamma^{-1}$.
The present paper is focused on the boundary case $\alpha=1-\gamma^{-1}$. We show that a critical point shifts occurs in this case, and 
obtain an estimate for its intensity.
\\[10pt]
2010 \textit{Mathematics Subject Classification: 60K35, 60K37, 82B27, 82B44}
\\[10pt]
  \textit{Keywords: Pinning model, disorder relevance, stable laws, Harris criterion.}
\end{abstract}
\maketitle

\section{Introduction}

The renewal pinning model has been developed as a toy model to understand phenomena like wetting in two dimension \cite{cf:Ab} 
and pinning of a polymer to a defect line \cite{cf:FLN}.
Due to its simplicity and the fact that the critical exponent associated to the localization transition can be tuned to any value just by modifying parameter 
(the tail exponent of the renewal process in \eqref{Kt}), 
it has also been employed as benchmark to test prediction concerning the effect of disorder obtained renormalization group arguments. We refer to the monographs 
\cite{cf:GB, cf:GB2} for a complete introduction to the subject.

\medskip

More precisely a rich literature has been developed (see
\cite{cf:A06, cf:AZ08, cf:CdH,  cf:DGLT07, cf:DHV,  cf:GT05, cf:Lmart, cf:T08} and references therein), in order to establish rigorously that 
the sensibility of the system to disorder is determined by the sign of the critical exponent associated to  the specific heat 
the as predicted by Harris \cite{cf:Harris}.
More precisely it was shown that when the specific-heat exponent is positive (which corresponds to $\alpha>1/2$ for the exponent in \eqref{Kt})
disorder even of small intensity shifts the critical point and modifies the critical exponent, while when it is negative ($\alpha<1/2$) the 
critical point and the critical exponent of the localization transition are conserved

\medskip

The criterion developed by Harris does not yield any prediction when the specific heat exponent vanishes:
this corresponds to a tails exponent $\alpha=1/2$ for the renewal process. This case is of special importance in the case of pinning as it corresponds 
to the original random walk pinning model (see e.g.\ \cite{cf:Fisher}). A more detailed renormalization group analysis in \cite{cf:DHV} yielded that in 
this so-called marginal case,
disorder should also be relevant (a prediction conflicting with others made in the literature e.g. \cite{cf:FLNO}, see the introduction of \cite{cf:GLT08} 
for a more detailed account on the controversy). This conjecture was proved in \cite{cf:GLT08} (see also \cite{cf:BL, cf:GLT09}).

\medskip

As most heuristics concerning disorder relevance rely on second moment expansion, a natural question is: 
\begin{center}
Is Harris criterion valid when the disorder has infinite variance? 
\end{center}
The issue was raised for pinning model in \cite{cf:LS} and it was shown that when the disorder is in the domain of attraction of a $\gamma$-stable law 
$\gamma\in (1,2)$,
Harris criterion is not satisfied. More precisely we showed that the critical point is shifted when $\alpha>1-\gamma^{-1}$ and that 
that critical points and exponents are not perturbed by a small amount of disorder when $\alpha<1-\gamma^{-1}$.

 \medskip
 
 In the present work we investigate the marginal case $\alpha=1-\gamma^{-1}$ for which we  prove disorder relevance.
It presents strong analogies with the Random Walk pinning model treated in \cite{cf:DHV} and \cite{cf:GLT08}.
 While the methods used to resolve it are clearly inspired by those used in the marginal case with second moment \cite{ cf:BL, cf:GLT08, cf:GLT09}, 
they also incorporate new ingredients which are necessary to deal with heavier tail disorder.

 \section{Model and results}

   \subsection{Disordered pinning and phase transition}

  Consider $\tau = (\tau_{n})_{n \geq 0}$ a recurrent integer valued renewal process, 
  that is a random sequence starting from $\tau_0=0$ whose increments $(\tau_{n+1}-\tau_{n})$ are independent,
   identically distributed (IID) positive integers. We let $\bP$ denote the associated probability distribution 
and assume that the inter-arrival distribution has power-law decay or more precisely
  \begin{equation}\label{Kt}
   K(n) :=  \bP[\tau_{1} = n] \stackrel{n\to \infty}{\sim} C_K n^{-(1+\ga)}, \quad \ga \in (0,1),
  \end{equation} 
where $C_K>0$ is an arbitrary constant. 
Note that $\tau$ can alternatively be considered as an infinite
   subset of $\bbN$ and in our notation $\{ n\in \tau \}$ is equivalent to $\{ \exists k\in \bbN, \ \tau_k=n\}$.

 \medskip

\noindent We consider a sequence of IID random variables $(\go_{n})_{n\ge 0}$ and denote its law by $\bbP$ which satisfies
 $\E[\go_1]=0$ and for some $a\in(0,1)$
\begin{equation}\label{star}
 \bbP[\go_1 \ge  -a]=1.
\end{equation}
We work under the assumption that $\go$ is in the domain of attraction of a $\gamma$-stable law, or more precisely we assume that
for some $C_{\bbP}>0$ we have
\begin{equation}\label{defgamma}
    \bbP[\go_n\ge x]\stackrel{x\to \infty}{\sim} C_{\bbP}x^{-\gamma}, \quad \gamma\in (1,2).
\end{equation}
Given $\gb  \in [0,1]$, $h \in \bbR$, and $N  \in \N$, we define 
    a modified renewal measure $\bP^{\gb,\go}_{N,h}$ whose 
    Radon-Nikodym derivative with respect to $\bP$ is given by 
   \begin{equation}\label{defmod}
    \frac{\dd\bP^{\gb,\go}_{N,h}}{\dd \bP}(\tau) = \frac 1 {Z^{\gb,\go}_{N,h}}  \left(\prod_{n\in [1,N]\cap \tau}e^{h}(\gb\go_n+1)\right)\ind_{\{N\in \tau\}},
   \end{equation} 
   where
    \begin{equation}\label{partfunc}
     Z^{\gb,\go}_{N,h} = \bE\left[  \left(\prod_{n\in [1,N]\cap \tau}e^{h}(\gb\go_n+1)\right)\ind_{\{N\in \tau\}} \right].
    \end{equation} 
 In the case $\gb=0$, we retrieve the homogeneous pinning model which, setting $\delta_n:= \ind_{\{n\in \tau\}}$, is defined by 
    \begin{equation}
        \frac{\dd\bP_{N,h}}{\dd \bP}(\tau):=\frac{1}{Z_{N,h}}e^{h\sum_{n=1}^N\gd_n}\gd_N
        \quad \text{and} \quad Z_{N,h}:=\bE\left[e^{h\sum_{n=1}^N \gd_n}\gd_N\right].
    \end{equation} 
    We investigate the behavior of $\tau$ under $\bP^{\gb,\go}_{N,h}$ using the notion of \textit{free energy per monomer}, 
    which is defined as the asymptotic growth rate of the partition function
    \begin{equation}
    \tf(\gb,h) := \lim_{N \to \infty} \frac{1}{N} 
    \log Z^{\gb,\go}_{N,h} \stackrel{\bbbP- a.s.}{=} \lim_{N \to \infty} \tfrac{1}{N} 
    \E\left[ \log Z^{\gb,\go}_{N,h} \right]<\infty.
    \end{equation}  
We refer to \cite[Theorem 4.1]{cf:GB} for a proof of existence of   $\tf(\gb,h)$. 
Note that $\tf(\gb,h)$ is non-negative, and that $h\mapsto \tf(\gb,h)$ is non-decreasing and convex (as a limit of non decreasing convex functions).
By exchanging  limit and  derivative, as allowed by convexity, we obtain that the derivative of $\tf$ w.r.t. $h$  corresponds to the asymptotic contact fraction
   \begin{equation}\label{contacts}
    \partial_h \tf(\gb,h):= \lim_{N\to \infty} \frac{1}{N}\bE^{\gb,\go}_{N,h}\left[ | \tau\cap[1,N]|\right].
   \end{equation}
In particular, if one sets 
 \begin{equation}
   h_c(\gb):= \inf\{ h\in \bbR \ | \  \tf(\gb,h)>0 \},
  \end{equation}
we have 
\begin{equation}\begin{split}
  \limsup_{N\to \infty} \frac{1}{N}\bE^{\gb,\go}_{N,h}\left[|\tau\cap[1,N]|\right]=0 \quad \text{ if } h<h_c(\gb),\\
  \liminf_{N\to \infty} \frac{1}{N}\bE^{\gb,\go}_{N,h}\left[\left| \tau\cap[1,N]\right|\right]>0 \quad \text{ if } h>h_c(\gb).
\end{split}\end{equation}
We say in the first case that $\tau$ is delocalized and in the second one that it is localized.
It can be proved using simple inequalities (see below or \cite[Proposition 5.1]{cf:GB}), that $h_c(\gb)\notin\{-\infty,\infty\}$ meaning 
that this phase transition really occurs.

\subsection{Annealed comparison and disorder relevance}

Using Jensen's inequality and the assumption that the $\go$s have zero mean we have 
\begin{equation}\label{annehilde}
 \bbE \left[ \log Z^{\gb,\go}_{N,h} \right]\le \log  \bbE \left[  Z^{\gb,\go}_{N,h} \right]=\log Z_{N,h}.
\end{equation}
Hence
\begin{equation}\label{ancompa}
 \forall \gb\in(0,1], \quad \tf(\gb,h)\le \tf(h),
\end{equation}
Our assumption \eqref{star} also implies that $\tf(\gb,h)\ge \tf(h+\log(1-a \gb ))$.

\medskip

The localization transition is easier to analyze when $\gb=0$, and this makes the inequality \eqref{ancompa} more interesting:
$\tf(h)$ is the solution of an explicit inverse problem
    \begin{equation}
     \tf(h)=\begin{cases}
             0 \quad &\text{ if } h\le 0,\\
             g^{-1}(h) \quad &\text{ if } h>0,
             \end{cases}
           \end{equation}
where $g$ is defined on $\bbR_+$ by
$$g(x):=-\log \left( \sum_{n=1}^{\infty} e^{-nx} K(n) \right).$$
In particular we have $h_c(0)=0$ and from a closer analysis of $g$ (see \cite[Theorem 2.1]{cf:GB})
$$\tf(h)\stackrel{h\to 0+}{\sim} \left(\frac{\alpha h}{C_K\gG(1-\alpha)}\right)^{\frac{1}{\alpha}}.$$
A natural question is to ask whether the annealed comparison \eqref{ancompa} is sharp, in the following sense:
\begin{itemize}
 \item [(A)] Is the critical preserved when disorder is introduced:\\
  Do we have $h_c(\gb)=0$ \ ?
 \item [(B)] Is the critical exponent for the phase transition preserved:\\
 Do we have $\tf(h,\gb)\approx h^{1/\alpha}$ in some sense \ ?
\end{itemize}
If these two property hold, it means that the introduction of disorder in the system does not change its property and this situation is
referred to as \textsl{irrelevant disorder}. In the case where the critical properties of the system are changed disorder is said to be relevant.

\subsection{Harris criterion and former results}

A.B Harris \cite{cf:Harris} developed a criterion in order to predict disorder.
For one dimensional systems such as the one studied in the present paper, 
it can be interpreted as follows if the critical exponent for free-energy of the pure (i.e.\ $\gb=0$) model is larger than 
$2$ then disorder is irrelevant for small $\gb$ whereas disorder is always relevant in the case when the exponent is larger than $2$.
In the case of pinning model, this means that disorder is irrelevant for $\alpha<1/2$ and relevant for $\alpha>1/2$.

\medskip

The validity of the Harris criterion has been confirmed in various cases for the pinning model, in the case where 
the environment has finite second moment $\bbE[\go^2_1]<\infty$ (see \cite{cf:A06, cf:Lmart, cf:T08} for the irrelevant disorder case, and 
\cite{cf:AZ08, cf:DGLT07, cf:GT05} in the relevant case). This assumption is far from being only technical as Harris heuristics is based on 
a second moment expansion at the vicinity of the critical point in order to test stability.

\medskip

For this reason we suspected that with an environment with an  heavier tail distribution, Harris criterion may not be valid.
This has been confirmed in \cite{cf:LS} where we have shown that disorder is irrelevant when  $\alpha<1-\gamma^{-1}$ and relevant for $\alpha>1-\gamma^{-1}.$

\begin{theorema}[From Theorems 2.3 and 2.4 in \cite{cf:LS}] \ \\

\begin{itemize}
\item[(A)] If $\alpha<1-\gamma^{-1}$, then there exists  $\gb_0$ such that for all $\gb\in (0,\gb_0]$  
we have \\ $h_c(\gb)=0$ and furthermore
\begin{equation}
 \lim_{h\to 0+} \frac{\log \tf(\gb,h) }{\tf(h)}=\frac{1}{\alpha}.
\end{equation}
\item[(B)] If $\alpha>1-\gamma^{-1}$, then for all $\gb$ we have $h_c(\gb)>0$ and 
\begin{equation}\label{exponenta}
 \lim_{\gb\to 0+} \frac{\log h_c(\gb) }{\log \gb}=\frac{\alpha\gamma}{1-\gamma(1-\alpha)}.
\end{equation}
\end{itemize}
\end{theorema}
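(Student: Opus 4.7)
The plan is to address the two cases via opposing techniques that are by now standard in disordered pinning: a (suitably truncated) second moment bound delivers irrelevance in (A), while a fractional moment / change-of-measure argument combined with coarse graining, matched by a rare-stretch lower bound, delivers the relevance and the sharp shift exponent in (B).

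For Part (A), \eqref{ancompa} already gives $\tf(\gb,h)\le \tf(h)$, so only the lower bound $\tf(\gb,h)\ge (1-o(1))\tf(h)$ needs work. Since $\go_1$ has no second moment, a direct second moment on $Z^{\gb,\go}_{N,h}$ is not available and I would instead introduce a truncated disorder $\tilde\go_n:=(\go_n\wedge M_N)-\bbE[\go_n\wedge M_N]$, choosing the truncation level $M_N$ so that $NM_N^{-\gamma}\to 0$ (so that $Z=\tilde Z$ on a typical block of length $N$) while $\gb^2 M_N^{2-\gamma}$ remains small (so that the truncated variance is under control). A replica computation then yields
\begin{equation}
\frac{\bbE\big[(\tilde Z^{\gb,\go}_{N,h})^2\big]}{\big(\bbE[\tilde Z^{\gb,\go}_{N,h}]\big)^2}\le \bE^{\otimes 2}\Big[\prod_{n\in\tau\cap\tilde\tau\cap[1,N]}\big(1+c\gb^2 M_N^{2-\gamma}\big)\Big],
\end{equation}
where $\tau,\tilde\tau$ are independent copies of the renewal. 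By the renewal theorem $\bP[n\in\tau]\sim c\,n^{\a-1}$, so $\sum_{n\le N}\bP[n\in\tau]^2$ is bounded uniformly in $N$ when $\a<1/2$, and in particular when $\a<1-\gamma^{-1}$. Tuning $M_N$ up to the pure correlation length $N=h^{-1/\a}$, the right-hand side stays bounded precisely when $\a<1-\gamma^{-1}$; a standard concentration / coarse-graining argument then promotes this into the matching lower bound on $\tf(\gb,h)$ and hence into the conservation of the critical exponent.

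For Part (B), the upper bound $h_c(\gb)\le \gb^{\a\gamma/(1-\gamma(1-\a))-o(1)}$ is obtained through the fractional moment method. Fix $\theta\in(0,1)$ (automatically $\theta<\gamma$, so all relevant $\theta$-moments are finite), partition $[1,N]$ into blocks of length $L=L(\gb)$ of the order of the conjectured correlation length, and expand $Z^{\gb,\go}_{N,h}$ as a sum over block contact patterns. Subadditivity of $x\mapsto x^\theta$ reduces the problem to estimating the fractional moment of the constrained block partition function. The crucial input is a \emph{change of measure} on $\go$ restricted to a block that reshapes its upper tail above a threshold $M$ (for instance replacing the density on $[M,\infty)$ by a steeper one, or imposing a soft cap on large exceedances): the biased annealed partition function is then small, while the Radon-Nikodym derivative has finite $\theta$-th moment provided the reshaping is gentle enough. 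Optimizing the three scales $L$, $M$ and $\theta$ against $h$ produces the announced bound on $h_c(\gb)$. For the matching lower bound on $h_c(\gb)$, I would use a rare-stretch argument: locate a positive density of blocks of length $L$ on which $\gb\sum_{n\in\tau\cap\mathrm{block}}\go_n$ exhibits an atypical $\gamma$-stable deviation of the right order, and localize $\tau$ on these favorable blocks to produce strictly positive free energy just above the predicted threshold.

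The main obstacle lies in Part (B), in designing a change of measure adapted to the stable tail $\bbP[\go_1\ge x]\sim C_\bbP x^{-\gamma}$. The Gaussian or exponential tilts used in the finite-variance case cost infinitely much under the present $\bbP$, so one must resort to a tail-dependent tilt acting primarily on the large values of $\go_n$, and the $\theta$-fractional cost of this tilt must be tracked simultaneously with the gain in the biased annealed partition function. The exponent $\a\gamma/(1-\gamma(1-\a))$ emerges from a sharp three-scale balance, and it is precisely the degeneracy of this formula at $\a=1-\gamma^{-1}$ (where the denominator vanishes) that explains why the marginal case addressed in the body of the present paper requires the finer analysis announced in the abstract.
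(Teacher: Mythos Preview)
This theorem is not proved in the present paper: as its header indicates, it is quoted from \cite{cf:LS} (Theorems~2.3 and~2.4 there), and the body of this paper treats exclusively the marginal case $\alpha=1-\gamma^{-1}$. There is thus no proof here against which to match your sketch, so I can only comment on its internal consistency.

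Your outline for Part~(B) is sound and aligns with the method of \cite{cf:LS}: fractional moments, coarse graining, and a change of measure that penalizes large values of $\go$ yield the relevance bound $h_c(\gb)>0$ with the stated exponent, and a rare-stretch localization gives the matching direction. Your closing remark also correctly identifies why the marginal case handled in the present paper needs a finer penalization (here, the dual-peak functional~$g$).

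Your sketch for Part~(A), however, has a genuine gap. You require simultaneously $N M_N^{-\gamma}\to 0$ and $\gb^2 M_N^{2-\gamma}$ small. The first forces $M_N\gg N^{1/\gamma}$, and since $\gamma<2$ this gives $M_N^{2-\gamma}\gg N^{(2-\gamma)/\gamma}\to\infty$, hence $\gb^2 M_N^{2-\gamma}\to\infty$ for any fixed $\gb>0$ as $h\to 0$ (i.e.\ as $N\to\infty$). In the second-moment ratio
\[
\bE^{\otimes 2}\Big[\big(1+c\,\gb^2 M_N^{2-\gamma}\big)^{|\tau\cap\tilde\tau\cap[1,N]|}\Big],
\]
the fact that $\sum_n u(n)^2<\infty$ (transience of $\tau\cap\tilde\tau$, which holds for \emph{every} $\alpha<1/2$, not only for $\alpha<1-\gamma^{-1}$) merely guarantees that $|\tau\cap\tilde\tau|$ has geometric-type tails; once the base $1+c\,\gb^2 M_N^{2-\gamma}$ exceeds the reciprocal of the return probability, this moment generating function diverges. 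So nothing in your computation singles out the threshold $\alpha<1-\gamma^{-1}$: as written, the bound fails for all $\alpha\in(0,1)$ once $h$ is pushed toward~$0$ at fixed~$\gb$. Even relaxing the first requirement to ``no truncated site is visited by $\tau$'' only lowers $M_N$ to order $N^{\alpha/\gamma}$, and $M_N^{2-\gamma}$ still diverges. The irrelevance argument in \cite{cf:LS} must therefore handle the interaction between the heavy tail of $\go$ and the sparseness of $\tau$ by something sharper than a plain truncated second moment.
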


These results indicate that Harris criterion has to be reinterpreted in the case where the environment is heavy-tailed.
A question which has been left open in  \cite{cf:LS} is the case $\alpha=1-\gamma^{-1}$ which we refer to as the \textsl{marginal case}.

\subsection{Main result}

The main achievement of this paper is to prove that disorder shifts the critical point for all values of $\gb$ also in the marginal case 
 $\alpha=1-\gamma^{-1}$. The result bears some similarity with the one proved in \cite{cf:GLT08}, when it is shown that under finite second moment assumption for $\go$
 (\cite{cf:GLT08} actually only treats the case of Gaussian environment but the generalization can be found in \cite{cf:GLT09}), disorder is relevant 
 when the renewal exponent satisfies $\alpha=1/2$.

 \begin{theorem}
 Assume that \eqref{Kt} and \eqref{defgamma} are satisfied for $\alpha=1-\gamma^{-1}$.
Then, for any $\gb\in[0,1]$, $h_c(\gb)>0$ and furthermore,  there exists a constant $A>0$ such that
 \begin{equation}\label{lowermargi}
\forall \gb\in(0,1], \quad  h_c(\gb)\ge \exp(-A \gb^{-2\gamma}).
 \end{equation}
\end{theorem}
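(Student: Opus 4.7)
The plan is to prove the lower bound on $h_c(\gb)$ via the fractional-moment method combined with coarse-graining, adapting to the heavy-tail setting the strategy developed for the finite-variance marginal case in \cite{cf:GLT08, cf:BL, cf:GLT09}. Pick any $\theta\in(1/\gamma,1)$, so that $\bbE[|\go_1|^\theta]<\infty$, and set $\ell=\ell(\gb):=\lceil\exp(A\gb^{-2\gamma})\rceil$. The aim is to show that for every $h\leq 1/\ell$,
\begin{equation*}
\sup_{N\geq 1}\bbE\bigl[(Z^{\gb,\go}_{N,h})^\theta\bigr]<\infty,
\end{equation*}
which by Jensen's inequality applied to $\bbE[\log Z^{\gb,\go}_{N,h}]\leq \theta^{-1}\log\bbE[(Z^{\gb,\go}_{N,h})^\theta]$ forces $\tf(\gb,h)=0$ and hence $h_c(\gb)\geq 1/\ell$.

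The first step is coarse-graining: partition $\{1,\dots,N\}$ into blocks $B_k:=((k-1)\ell,k\ell]$ for $k\leq M:=N/\ell$, and decompose $Z^{\gb,\go}_{N,h}=\sum_I Z_I^{\gb,\go}$, where $I\subset\{1,\dots,M\}$ is the set of blocks visited by $\tau$ and $Z_I^{\gb,\go}$ collects trajectories with this skeleton (with additional bookkeeping over entry/exit contact points in each block of $I$). The subadditivity $(\sum_i x_i)^\theta\leq \sum_i x_i^\theta$, valid for $\theta\in(0,1)$, then gives
\begin{equation*}
\bbE\bigl[(Z^{\gb,\go}_{N,h})^\theta\bigr]\leq \sum_I \bbE\bigl[(Z_I^{\gb,\go})^\theta\bigr],
\end{equation*}
and the target is a per-skeleton bound $\bbE[(Z_I^{\gb,\go})^\theta]\leq C\,\delta^{|I|}$ with $\delta<1$ small enough to beat the renewal entropy controlling the sum over $I$.

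The key ingredient is a block-by-block change of measure. For each $I$ and a well-chosen positive functional $g_I(\go)$, H\"older's inequality gives
\begin{equation*}
\bbE\bigl[(Z_I^{\gb,\go})^\theta\bigr]\leq \bbE[g_I^{-\theta/(1-\theta)}]^{1-\theta}\;\bbE[g_I\, Z_I^{\gb,\go}]^\theta.
\end{equation*}
Since $\go$ has only moments strictly below $\gamma<2$, a plain quadratic Gaussian-type tilt as in \cite{cf:GLT08} has no finite moments; I would therefore first truncate by $\tilde\go_n:=\go_n\wedge T$ for a threshold $T=T(\ell)$, and take $g_I$ to be a quadratic exponential of the form $\exp(-\lambda\sum_{n<m}J(m-n)\tilde\go_n\tilde\go_m)$ summed over pairs inside the visited blocks, with coupling $J$ built from the renewal mass $u(k):=\bP[k\in\tau]\sim c\,k^{-1/\gamma}$. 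The relevant marginal quantity in the $\gamma$-stable setting is $\sum_{k\leq\ell}u(k)^\gamma\sim c\log\ell$, which replaces the usual $\sum u(k)^2$ of the Gaussian theory and is the source of the logarithmic divergence driving disorder relevance. Choosing $T$ of order $\ell^{\alpha/\gamma}$ so that typical blocks contain no $\go_n>T$, and adjusting $\lambda$ so that the cost factor grows at most like $e^{c|I|}$, the gain factor acquires a contribution of the order $\exp(-c'\gb^2 T^{2-\gamma}\log\ell)$ per visited block; demanding this to beat the renewal entropy yields after optimisation the condition $\log\ell\gtrsim \gb^{-2\gamma}$, which accounts for the exponent $2\gamma$ in the statement.

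The main obstacle is the precise design and moment control of $g_I$: truncation, mandatory because of the absence of a finite second moment for $\go$, limits the leverage of the quadratic tilt, and one must balance $T$, $\lambda$, $J$ and $\theta$ so that the cost factor $\bbE[g_I^{-\theta/(1-\theta)}]^{1-\theta}$ grows at most exponentially in $|I|$ with a manageable rate, while the gain factor still collects the full $\log\ell$ divergence coming from the marginal sum on each block of $I$. A secondary technical difficulty is the treatment of atypical disorder configurations where some $\go_n$ in a visited block exceeds $T$: these have to be isolated, for instance by splitting $Z_I^{\gb,\go}$ into a "truncated" and an "atypical" part, the latter being controlled by a crude estimate that exploits $\theta<\gamma$.
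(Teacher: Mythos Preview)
Your overall architecture---fractional moment bound, coarse graining into blocks of length $\ell\asymp\exp(A\gb^{-2\gamma})$, and H\"older's inequality with a block-product change of measure $g_I$---matches the paper exactly. The divergence is entirely in the choice of $g$.

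You propose a truncated quadratic tilt $g=\exp\bigl(-\lambda\sum_{n<m}J(m-n)\tilde\go_n\tilde\go_m\bigr)$ with $\tilde\go_n=\go_n\wedge T$, in direct analogy with \cite{cf:GLT08,cf:GLT09,cf:BL}. The paper instead abandons quadratic forms altogether and takes $g=\exp(-M\ind_{\cA_\ell})$, where $\cA_\ell$ is the event that the block contains a pair $i,j$ with $\min(\go_i,\go_j)\geq V(|i-j|)$ for a distance-dependent threshold $V(n)\asymp(\ell(\log\ell)n)^{1/(2\gamma)}$. This ``dual-peak'' indicator is the paper's main technical novelty: the cost factor is then trivially $1+e^{cM}\bbP[\cA_\ell]$, and the gain reduces to showing that under the size-biased law $(1+\gb\go_n)\dd\bbP$ on sites of $\tau$, the dual-peak event has probability close to one. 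The latter is done by a second-moment argument on the count of dual peaks, and this is precisely where the marginal divergence $\sum_{j\leq\ell}j^{-\alpha}u(j)\asymp\log\ell$ enters (it is the same $\sum u(k)^\gamma$ you identify, reweighted).

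Your proposal has a genuine gap at the level of the change of measure. First, your parameter choice is internally inconsistent: you take $T\asymp\ell^{\alpha/\gamma}$ and claim ``typical blocks contain no $\go_n>T$'', but the latter requires $\ell\,\bbP[\go>T]\asymp\ell T^{-\gamma}\ll1$, i.e.\ $T\gg\ell^{1/\gamma}$, whereas $\alpha/\gamma<1/\gamma$. Second, and more seriously, the gain heuristic $\exp(-c'\gb^2 T^{2-\gamma}\log\ell)$ per block is not substantiated: controlling $\bbE[g_I Z_I]$ for a quadratic $g_I$ requires understanding the joint law of $(\tilde\go_n)_{n\in\tau}$ under the size-biased measure well enough to extract the $\log\ell$ factor, and with heavy tails the interaction between truncation, size-biasing, and the exponential of a quadratic form is delicate---the paper explicitly remarks that quadratic forms in $\go$ are ``trickier to analyze'' here and introduces the indicator penalization precisely to avoid this. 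Your sketch acknowledges the obstacle but does not indicate how to overcome it, and the balance of $T,\lambda,J,\theta$ you allude to is not carried out. The paper's indicator approach sidesteps all of this: cost control is a one-line union bound, and the gain becomes a clean first/second-moment computation on an integer-valued functional.
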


  \begin{rem}
 We are discussing in this paper only the case where the inter-arrival distribution $K(\cdot)$ has a pure power-law behavior, cf.\ \eqref{Kt}.
 When a slowly varying function is introduced instead of the constant $C_K$, the picture gets slightly more complicated and a necessary and 
 sufficient condition for disorder relevance 
 was proved in \cite{cf:BL} under the finite second moment assumption. For $\gamma$-stable environment we refer to \cite[Section 2.5.1]{cf:LS} for a conjecture.
 \end{rem}
 
\begin{rem}
 We do not believe that this lower bound on $h_c(\gb)$ is optimal but we know from \cite[Proposition 6.1]{cf:LS} that $h_c(\gb)$ is smaller than any power of $\gb$ 
 at the vicinity of $0$. This contrasts with the case $\alpha>1-\gamma^{-1}$, cf.\ \eqref{exponenta}.
 It seems plausible that improving the technique presented in the present paper in the same spirit as what is done in \cite{cf:BL}, 
 we can bring the exponent in the exponential in \eqref{lowermargi} from $2\gamma$ down to $\gamma$, which could be the optimal answer.
 We would not know however how to obtain a matching upper bound.
 \end{rem}
 
\subsection{Organization of the paper}

The proof of our main statement is divided into three main steps: in Section \ref{corse} 
we present a sequence of inequalities which combines coarse graining ideas (in a very similar spirit with what has been done e.g.\ 
in \cite{cf:BL,cf:GLT08}) and a change of measure which penalizes environment which displays atypical ``dual peaks''. This reduces the problem to
to estimating the coarse-grained partition function under the penalized measure (Proposition \ref{prop:bouzouf}), provided we control the ``cost'' 
of the penalization procedure (Proposition \ref{prop:kist}).
In Section \ref{kb}, Proposition \ref{prop:kist} is proved while Proposition \ref{prop:bouzouf} is reduced to a one block estimate (Proposition \ref{prop:binif}), which is itself proved in 
Section \ref{oneB}

 \section{Fractional moments, coarse graining and change of measure} \label{corse}

 \subsection{Fractional moments}
 
Let us consider $\theta\in (0,1)$. A more efficient bound than \eqref{annehilde} can be achieved on the free-energy by applying Jensen's inequality in a different manner.
 \begin{equation}\label{annehil2}
 \bbE \left[ \log Z^{\gb,\go}_{N,h} \right]= 
 \frac{1}{\theta}  \bbE \left[  \log \left(Z^{\gb,\go}_{N,h}\right)^{\theta} \right] \le  
 \frac{1}{\theta}\log  \bbE \left[  \left(Z^{\gb,\go}_{N,h}\right)^{\theta} \right].
\end{equation}
In particular we can prove that $\tf(\gb,h)=0$ if we have 
\begin{equation}\label{daliminf}
 \liminf_{N\to \infty}\frac{1}{N}\log  \bbE \left[  \left(Z^{\gb,\go}_{N,h}\right)^{\theta} \right]=0.
\end{equation}
We set 
\begin{equation}\label{mdef}
h_{\gb}:=\exp\left(- A \gb^{\gamma^{-1}}\right),
\end{equation}
where $A>0$ is a sufficiently large constant,
and consider a special length
\begin{equation}\label{skoop}
 \ell_{\gb}:= h^{-1}.
\end{equation}
We consider a system whose size $N=m \ell$ is an integer multiple of $\ell$.

\begin{proposition}\label{damainpropz}
Given $\theta\in \left( \frac{\gamma}{2\gamma-1},1\right)$, if $A$ is chosen sufficiently large, we have for all $\gb\in (0,1]$
 \begin{equation}\label{eq:frakick}
  \limsup_{m\to \infty} \bbE\left[ \left(Z^{\gb,\go}_{m\ell_{\gb} ,h_{\gb}}\right)^\theta\right]<\infty.
 \end{equation}
In particular we have 
$$h_c(\gb)\ge h_{\gb}.$$
 \end{proposition}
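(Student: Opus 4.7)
The plan is to prove the fractional moment bound \eqref{eq:frakick} via the coarse-graining and change-of-measure scheme of \cite{cf:GLT08,cf:BL}, with a tilt tailored for heavy-tailed environments. Once \eqref{eq:frakick} is established, the second conclusion is immediate: boundedness of the fractional moment gives $\lim_{m\to\infty}\tfrac{1}{m\ell_\gb}\log \bbE[(Z^{\gb,\go}_{m\ell_\gb,h_\gb})^\theta]=0$, which combined with \eqref{annehil2} forces $\tf(\gb,h_\gb)=0$ and hence $h_c(\gb)\ge h_\gb$.

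I would partition $\lint 1,m\ell_\gb\rint$ into $m$ blocks $B_k:=\lint(k-1)\ell_\gb+1,k\ell_\gb\rint$ and, for every subset $I\subset\{1,\dots,m\}$ with $m\in I$, write $Z_I$ for the restriction of $Z^{\gb,\go}_{m\ell_\gb,h_\gb}$ to trajectories such that $\{k:\tau\cap B_k\neq\emptyset\}=I$. Subadditivity of $x\mapsto x^\theta$ on $\bbR_+$ together with H\"older's inequality applied to a block-factorized density $f_I:=\prod_{k\in I}g_k((\go_n)_{n\in B_k})$ with exponents $(1/\theta,1/(1-\theta))$ yields
\begin{equation}\label{plan:splitting}
\bbE\bigl[(Z^{\gb,\go}_{m\ell_\gb,h_\gb})^\theta\bigr]\le\sum_{I\ni m}\Bigl(\prod_{k\in I}\bbE\bigl[g_k^{-1/(1-\theta)}\bigr]\Bigr)^{1-\theta}\bbE[f_I Z_I]^\theta.
\end{equation}
The density $g_k$ must be designed so as to penalize environments exhibiting atypical ``dual peaks'' in $B_k$, that is, pairs of heavy-tail excursions lying at renewal-compatible separations. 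This is the structural input which, in the $\gamma$-stable regime, substitutes for the second-moment tilt used in \cite{cf:GLT08}.

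Two independent estimates then feed \eqref{plan:splitting}. The cost bound (Proposition \ref{prop:kist}) asserts that for $\theta>\gamma/(2\gamma-1)$ one has $\bbE[g_k^{-1/(1-\theta)}]\le C$ uniformly in $k$ and in $\gb$ small. The tilted partition function $\bbE[f_I Z_I]$ is expanded (Proposition \ref{prop:bouzouf}) according to the renewal jumps between the blocks of $I=\{k_1<\cdots<k_r=m\}$, yielding a product of one-block tilted partition functions $u_j$ and renewal weights of order $K(\ell_\gb(k_{j+1}-k_j))\ell_\gb\asymp(k_{j+1}-k_j)^{-(1+\alpha)}\ell_\gb^{-\alpha}$; the one-block estimate (Proposition \ref{prop:binif}) then yields $u_j\le\gep(A)$ with $\gep(A)\to 0$ as $A\to\infty$. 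Plugging these bounds into \eqref{plan:splitting}, the right-hand side is dominated by
\begin{equation}
\sum_{r\ge 1}(C\gep(A)^\theta)^r\sum_{k_1<\cdots<k_r=m}\prod_{j=1}^{r-1}(k_{j+1}-k_j)^{-\theta(1+\alpha)}.
\end{equation}
Since $\alpha=1-\gamma^{-1}$, the exponent $\theta(1+\alpha)=\theta(2\gamma-1)/\gamma$ exceeds $1$ exactly under the standing hypothesis $\theta>\gamma/(2\gamma-1)$, so the inner convolution sum is uniformly bounded in $m$; choosing $A$ large enough that $C\gep(A)^\theta<1/2$ then closes the geometric series in $r$ and proves \eqref{eq:frakick}.

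The main obstacle is the one-block estimate (Proposition \ref{prop:binif}): the tilt $g_k$ must simultaneously suppress the tilted partition function down to $\gep(A)$ while admitting a controlled $(1-\theta)^{-1}$-inverse moment. In the finite-variance marginal case of \cite{cf:GLT08} a quadratic tilt in the disorder suffices to beat the logarithmic divergence of the second-moment overlap; in the $\gamma$-stable regime the dominant contribution to the analogue of the overlap comes from joint occurrences of pairs of large disorder values at renewal-compatible separations, so the penalization has to single out precisely those pair-events. Calibrating this ``dual peak'' tilt so that it remains compatible with the specific scaling $h_\gb=e^{-A\gb^{\gamma^{-1}}}$, $\ell_\gb=h_\gb^{-1}$, and in particular ensuring that $\gep(A)$ can be made small uniformly in $\gb\in(0,1]$, is the technical core of the argument.
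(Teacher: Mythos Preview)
Your proposal is correct and follows essentially the same route as the paper: coarse-graining into blocks, subadditivity of $x\mapsto x^\theta$, H\"older with a block-factorized ``dual peak'' penalization, and then the cost/benefit estimates of Propositions~\ref{prop:kist}, \ref{prop:bouzouf}, \ref{prop:binif} to close a geometric series using $\theta(1+\alpha)>1$. The only slips are cosmetic: the H\"older exponent should read $g_k^{-\theta/(1-\theta)}$ rather than $g_k^{-1/(1-\theta)}$, and the lower bound $\theta>\gamma/(2\gamma-1)$ is used only for summability of the final renewal-type series, not for the cost estimate itself.
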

 
The proof of the proposition goes in two steps: Firstly, we use a kind of bootstrapping argument in order to reduce the problem to estimates of
partition functions of systems of size smaller or equal to $\ell$.
Secondly, to control the partition function of these smaller system we introduce a change of measure procedure which has the effect of penalizing some atypical 
environments whose contribution to the annealed partition function is significant.

\medskip

The approach adopted in \cite{cf:LS} to prove disorder relevance when $\alpha> 1-\gamma^{-1}$
used a finite volume criterion from \cite{cf:DGLT07}, and penalized environments for which too large values of $\go$ appeared.
This approach fails to give any result in the present case
and we need to perform a finer analysis to catch the critical point shift.

\medskip

We introduce two improvement with respect to the method used in \cite{cf:LS}: the first is to replace \cite[Proposition 2.5]{cf:DGLT07} by a finer coarse graining.
This is not a new idea and is very similar to the method applied e.g.\ in \cite{cf:T09}.
The second improvement is the main novelty of this paper and concerns the type of penalization considered in the change of measure procedure: 
we design a new form of penalization which involves considering pairs of site where $\go$ displays high values.

\medskip

This approach contrasts with what has been done in the marginal case under finite second moment assumption:
in the case of Gaussian environment,  a penalization that would induce a change of the covariance structure was considered \cite{cf:GLT08},
and more generally for an environment with finite second moment 
a tilting by a quadratic form, or a multi-linear form of higher order \cite{cf:GLT09,cf:BL} was used in order to prove marginal disorder relevance.
 Under assumption \eqref{defgamma} quadratic forms in $\go$ seems trickier to analyze and we need to select another function of $\go$ which is easier to manipulate.
We decide to look only for extremal values in $\go$ and to penalize environments which 
present two high-peaks close to each other. 
The exact threshold that we use is determined by a function of the distance between the two sites.

\subsection{The coarse graining procedure}

For the sake of completeness let us repeat in full details the coarse graining procedure from \cite{cf:BL}. 
We split the system into blocks of size $\ell$, we define for $i\in \lint 1, m\rint$
\begin{equation}
 \ B_i:=\lint \ell (i-1)+1, \ell i \rint.
\end{equation}
Given $\cI = \{i_1,\ldots, i_{|\cI|}\} \subset \lint 1, m\rint$
we define the event 
\begin{equation}
E_{\cI}:= \Big\{ \, \big\{ i\in \lint 1, m\rint  \ : \ \tau\cap B_i\ne \emptyset \, \big\}= \mathcal I  \, \Big\}\, ,
\end{equation}
and set $Z^{\cI}$ to be the contribution to the partition function of the event $E_{\cI}$,
\begin{equation}
Z^{\cI}:= Z^{\gb,\go}_{N,h}(E_{\cI})=Z^{\gb,\go}_{N,h}\bE^{\gb,\go}_{N,h}[E_{\cI}].
\end{equation}
Note that $Z^{\cI}>0$ if and only if $m\in \cI$. When $\tau\in E_{\cI}$, the set $\cI$ is called the coarse-grained trajectory of $\tau$.
As the $E_{\cI}$ are mutually disjoint events, $Z^{\gb,\go}_{N,h}=\sum_{\cI\subset\lint 1, m\rint}Z^{\cI}$ and thus
using the inequality $(\sum a_i)^{\theta} \leq \sum a_i^{\theta}$ for non-negative $a_i$'s, we obtain
\begin{equation}\label{eq:datsun}
\bbE\left[\left( Z^{\gb,\go}_{N,h}\right)^{\theta}\right]\le \sum_{\cI\subset\{1,\dots, m\}} \bbE\left[\left( Z^{\cI}\right)^{\theta}\right].
\end{equation}
We therefore reduced the proof to that of an upper bound on $\bbE\left[\left( Z^{\cI}\right)^{q}\right]$, 
which can be interpreted as the contribution of the coarse grained trajectory $\cI$ to the fractional moment of the partition function.

\begin{proposition}\label{coarsegrained}
Given $\eta>0$, and $\theta\in (0,1)$, if $A$ is sufficiently large then there exists a constant $C_{\ell}$ such that
for all $\gb \in (0,1]$
for all $m \ge 1$ and $\cI\subset \lint 1,m\rint$,
\begin{equation}\label{smallpining}
  \bbE\left[\left( Z^{\cI}\right)^{\theta}\right]\le C_{\ell} \prod_{k=1}^{|\cI|} \frac{\eta}{(i_{k}-i_{k-1})^{(1+\alpha)\theta}},
\end{equation}
where by convention we have set $i_0:=0$.
\end{proposition}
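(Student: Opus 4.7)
The plan is to decompose $Z^{\cI}$ along the coarse-grained trajectory using the first and last renewal contacts in each visited block, and then to exploit independence of the environment across disjoint blocks to reduce everything to a single-block fractional-moment estimate, for which the change-of-measure machinery (Propositions \ref{prop:kist} and \ref{prop:binif}) can be applied.

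First I would factorize. For $\tau \in E_{\cI}$ and $i_k \in \cI$, set $a_k := \min(\tau \cap B_{i_k})$ and $b_k := \max(\tau \cap B_{i_k})$. The renewal/Markov property yields
\begin{equation*}
Z^{\cI} = \sum_{\substack{(a_k,b_k)_{k=1}^{|\cI|} \\ a_k \le b_k,\ a_k,b_k \in B_{i_k}}} \prod_{k=1}^{|\cI|} K(a_k - b_{k-1})\, \check Z^{(k)}(a_k,b_k),
\end{equation*}
with the convention $b_0:=0$, where $\check Z^{(k)}(a,b)$ is the partition function of a renewal from $a$ to $b$ conditioned to remain in $B_{i_k}$, equipped with its disorder weights. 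Since $\theta<1$, subadditivity $(\sum x_j)^\theta \le \sum x_j^\theta$ combined with the fact that $\go$ restricted to distinct blocks $B_{i_k}$ is independent gives
\begin{equation*}
\bbE\bigl[(Z^{\cI})^\theta\bigr] \le \sum_{(a_k,b_k)_k} \prod_{k=1}^{|\cI|} K(a_k - b_{k-1})^\theta\, \bbE\bigl[\check Z^{(k)}(a_k,b_k)^\theta\bigr].
\end{equation*}

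Next I would extract the polynomial decay from the $K$-factor. When $i_k - i_{k-1} \ge 2$ one has $a_k - b_{k-1} \ge \ell(i_k-i_{k-1}-1)+1 \gtrsim \ell(i_k-i_{k-1})$, so \eqref{Kt} yields $K(a_k-b_{k-1})^\theta \le c\, \ell^{-(1+\ga)\theta}(i_k-i_{k-1})^{-(1+\ga)\theta}$; when $i_k = i_{k-1}+1$ one uses $K(a_k-b_{k-1})^\theta \le \|K\|_\infty^\theta$, which is consistent with the claimed bound since $(i_k-i_{k-1})^{(1+\ga)\theta}=1$. Summing the $\theta$-moment of $\check Z^{(k)}$ over $(a_k,b_k) \in B_{i_k}^2$ is then handled by the one-block estimate of Proposition \ref{prop:binif}: applying Hölder block-by-block with the tilt of Proposition \ref{prop:kist},
\begin{equation*}
\bbE\bigl[\check Z^{(k)}(a_k,b_k)^\theta\bigr] \le \bigl(\tilde{\bbE}\bigl[(d\bbP/d\tilde\bbP)^{1/(1-\theta)}\bigr]\bigr)^{1-\theta} \tilde{\bbE}[\check Z^{(k)}(a_k,b_k)]^\theta,
\end{equation*}
delivers, for $A$ large enough in \eqref{mdef}, an inequality of the form $\sum_{a_k,b_k \in B_{i_k}} \bbE[\check Z^{(k)}(a_k,b_k)^\theta] \le \eta\, \ell^{(1+\ga)\theta}$, whose $\ell$-prefactor precisely cancels the one coming from the $K$-estimate. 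Multiplying over $k$ yields the announced product bound; the boundary terms at $k=1$ (where $b_0=0$ is fixed) and $k=|\cI|$ (where $b_{|\cI|}=N$), together with $\ell$- and $\theta$-dependent combinatorial constants, are absorbed into $C_\ell$.

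The main obstacle in this program lies not in the bookkeeping above but in the one-block estimate itself: one must design a tilt $\tilde\bbP$ that penalizes ``dual-peak'' configurations of $\go$—the new ingredient motivated by the heavy-tailed assumption \eqref{defgamma}—such that the Radon--Nikodym cost quantified by Proposition \ref{prop:kist} remains bounded while the tilted single-block partition function is small enough to beat the factor $\ell^{(1+\ga)\theta}$. Once that is in hand, Proposition \ref{coarsegrained} follows from the standard coarse-graining computation sketched above, in the spirit of \cite{cf:BL, cf:T09}.
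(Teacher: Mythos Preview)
Your approach has a genuine gap: applying subadditivity $(\sum x_j)^\theta \le \sum x_j^\theta$ to the sum over contact-point configurations $(a_k,b_k)_k$ is too lossy and destroys the scheme. Once you do this, the factors $K(\cdot)$ and $u(\cdot)$ in the decomposition get raised to the power $\theta$, and the per-block summation no longer closes. Concretely, your claimed one-block bound $\sum_{a,b\in B_{i_k}} \bbE[\check Z^{(k)}(a,b)^\theta]\le \eta\,\ell^{(1+\alpha)\theta}$ is false: using $\check Z^{(k)}(a,b)\asymp u(b-a)Z^h_{[a,b]}$ and the block H\"older/Proposition~\ref{prop:binif} bound, one gets $\bbE[\check Z^{(k)}(a,b)^\theta]\le C\,u(b-a)^\theta[\eta+\ind_{\{b-a<\eta\ell\}}]^\theta$, and summing over $a\le b$ in $B_{i_k}$ yields a quantity of order $\eta^{c}\,\ell^{2+(\alpha-1)\theta}=\eta^{c}\,\ell^{(1+\alpha)\theta}\cdot\ell^{2(1-\theta)}$. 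The spurious factor $\ell^{2(1-\theta)}$ (one per block, hence to the power $|\cI|$) cannot be absorbed into $\eta$ or into a single constant $C_\ell$, and the argument collapses. The same excess power of $\ell$ appears if you try to keep the $K^\theta$ factors in the summation rather than bounding them first: repeating the computation in \eqref{themidek} with exponents $-(1+\alpha)\theta/2$ and $(\alpha-1)\theta$ instead of $-(1+\alpha)/2$ and $\alpha-1$ produces exactly $\ell^{2(1-\theta)}$ per block.

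The paper avoids this by reversing the order: it applies H\"older's inequality \eqref{eq:holder} directly to the undecomposed $Z^{\cI}$, with $G_{\cI}$ the product of block penalizations, so that the quantity to be estimated is the \emph{first} moment $\bbE[G_{\cI}(\go)Z^{\cI}]$. Only then is this first moment decomposed over $(d_k,f_k)$ as in \eqref{def:hatZ}--\eqref{def:hatZ2}, and the sums over contact points are carried out with the unmodified kernels $K(\cdot)$ and $u(\cdot)$; the powers of $\ell$ in \eqref{themidek} then cancel exactly, which is the content of Proposition~\ref{prop:bouzouf}. The fractional power $\theta$ is applied only at the very end, to the product-form bound \eqref{kouz}, and Propositions~\ref{prop:kist} and~\ref{prop:bouzouf} combine via \eqref{eq:holder} to give \eqref{smallpining}. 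In short: H\"older must be applied \emph{before} the contact-point decomposition, not after.
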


\begin{proof}[Proof of Proposition \ref{damainpropz} from Proposition \ref{coarsegrained}]
Note that by Jensen inequality we just have to prove the statement for $\theta$ close to one. We choose $\theta<1$ in a way that
$$(1+\alpha)\theta>1.$$
Using \eqref{eq:datsun} and Proposition \ref{coarsegrained} for all $m>0$ we have 
\begin{equation}
\bbE\left[\left( Z^{\gb,\go}_{N,h}\right)^{\theta}\right]\le C_{\ell} 
\sumtwo{\cI\subset  \lint 1, m \rint }{m\in \cI} \prod_{k=1}^{|\cI|} \frac{\eta}{(i_{k}-i_{k-1})^{(1+\alpha)\theta}}.
\end{equation}
By considering the sum over all finite subsets of $\bbN$ with cardinal at most $m$ instead of subsets of $\lint 1, m \rint$ and reorganizing the sum we obtain that 
\begin{equation}\label{zoos}
\bbE\left[\left( Z^{\gb,\go}_{N,h}\right)^{\theta}\right]\le  C_{\ell}  \sum_{j=1}^{m} \left(\eta \sum_{n\ge 1} n^{-(1+\alpha)\theta}\right)^j 
\end{equation}
We can check that choosing 
\begin{equation}
 \eta=\left(2\sum_{n=1}^\infty n^{-(1+\alpha)\theta}\right)^{-1}\,,
\end{equation}
 the l.h.s.\ of \eqref{zoos} is smaller $C_{\ell}$.
 \end{proof}

 \subsection{Penalization of the favorable environments}
 
Let us now introduce the notion of penalization of the environment in a cell,
which is the main tool to prove Proposition \ref{coarsegrained}.

\medskip

\noindent Given $G_{\cI}(\go)$ be a positive function of $(\go_n)_{n\in \union_{i\in \cI} B_i}$, 
using H\"older's inequality, we have 
\begin{equation}
\label{eq:holder}
\bbE\left[ \left(Z^{\cI}\right)^{\theta}\right] \leq 
\left(\bbE\left[G_{\cI}(\go)^{-\frac{\theta}{1-\theta}} \right]\right)^{1-\theta} \ \left(\bbE\left[G_{\cI}(\go) Z^{\cI} \right]\right)^{\theta}.
\end{equation}
We decide to apply this inequality with $G_{\cI}$, which is a product of functions of $(\go_n)_{n\in B_i}$, for $i\in \cI$. More precisely, given $g: \bbR^{\ell} \to \bbR^d$ we set
\begin{equation}\label{eq:strucGI}
 G_{\cI}(\go):=\prod_{i\in \cI} g(\go_{i(\ell-1)+1},\dots,\go_{i\ell})=:\prod_{i\in \cI} g_i(\go).
\end{equation}
We decide to use \eqref{eq:holder} for some $g$ that takes values in $[0,1]$, which should be equal to $1$ for ``typical environments'' but 
close to zero for environments that gives too much contribution to $Z^{\cI}$.
The difficulty lies in finding a function $g$ such that the cost for introducing the penalization 
$\bbE\left[G_{\cI}(\go)^{-\frac{\theta}{1-\theta}} \right]$ is not too big, and such that 
$\bbE\left[G_{\cI}(\go) Z^{\cI} \right]$ is much smaller than $\bbE\left[Z^{\cI} \right]$ (so that we get a large benefit out of it).

\medskip

Let us now introduce our choice for the function $g$.  Instead of giving a fixed penalty (i.e multiplication by some factor smaller than $1$)
for each $\go_n$ above a certain threshold (something of order $\ell^{\gamma^{-1}}$) like in \cite{cf:LS}, which would not give any conclusive 
result in the case presently studied,
we decide to introduce a $g$ that penalizes the presence of ``dual peaks in the environment'' $(\go_n)_{n\in \lint 1,\ell \rint}$.
Given $M$ a large constants, we set 
\begin{equation}
 g(\go_1,\dots,\go_{\ell}):=\exp\left(-M \ind_{\cA_{\ell} } \right),
\end{equation}
where 
\begin{equation}
 \cA_{\ell}:=\big\{ \exists i ,j \in \lint 1, \ell \rint \ : \  \min(\go_i ,\go_j)\ge  V(M,\ell,i-j)  \big\}, 
\end{equation}
and
\begin{equation}\label{defV}
V(M,\ell,n)=V(n):= e^{M^2}(\ell (\log \ell) n)^{\frac{1}{2\gamma}}. 
\end{equation}
We are going to prove that with this choice for $g$, the benefits of the penalization overcomes the cost. This is the object of the two following results, whose proofs are postponed 
to the next section. 

\begin{proposition}\label{prop:kist}
Given $\theta$, and $M>0$ sufficiently large we have
\begin{equation}\label{eq:glocost}
\bbE\left[G_{\cI}(\go)^{-\frac{\theta}{1-\theta}} \right]\le 2^{|\cI|}.
\end{equation}
\end{proposition}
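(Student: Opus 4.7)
The plan is to exploit the product structure of $G_{\cI}$. Since $G_{\cI}(\go)=\prod_{i\in \cI}g_i(\go)$ with each $g_i$ depending only on $(\go_n)_{n\in B_i}$, and the blocks $B_i$ are disjoint while $\go$ is i.i.d., the factors $g_i(\go)^{-\theta/(1-\theta)}$ are i.i.d.~under $\bbP$. Hence
\begin{equation*}
\bbE\!\left[G_{\cI}(\go)^{-\frac{\theta}{1-\theta}}\right] = \left(\bbE\!\left[g(\go_1,\dots,\go_\ell)^{-\frac{\theta}{1-\theta}}\right]\right)^{|\cI|},
\end{equation*}
so the statement reduces to proving that the single-block expectation is bounded by $2$. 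Setting $\lambda:=M\theta/(1-\theta)$ and using that $g=e^{-M\ind_{\cA_\ell}}$ is a two-valued function, the single-block expectation equals $1+(e^\lambda-1)\,\bbP[\cA_\ell]$. So the problem reduces to showing $\bbP[\cA_\ell]\le e^{-\lambda}$ up to a factor absorbed later.

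The key estimate is a union bound over pairs, combined with independence: for $i\neq j$ the variables $\go_i,\go_j$ are independent, so
\begin{equation*}
\bbP[\cA_\ell]\le \sum_{1\le i<j\le \ell}\bbP[\go_1\ge V(j-i)]^2.
\end{equation*}
Applying \eqref{defgamma} gives $\bbP[\go_1\ge V(n)]^2\le C\,V(n)^{-2\gamma}$. Plugging in $V(n)=e^{M^2}(\ell (\log \ell) n)^{1/(2\gamma)}$ yields $V(n)^{-2\gamma}=e^{-2M^2\gamma}(\ell\log \ell)^{-1}n^{-1}$, and grouping pairs with common separation $n=j-i$,
\begin{equation*}
\bbP[\cA_\ell]\le \frac{Ce^{-2M^2\gamma}}{\ell \log\ell}\sum_{n=1}^{\ell-1}\frac{\ell-n}{n}\le C'e^{-2M^2\gamma},
\end{equation*}
since $\sum_{n=1}^{\ell-1}(\ell-n)/n=O(\ell\log \ell)$. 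This $\ell$-independence is the whole point: because $|\cI|$ can be as large as $m$, any $\ell$-dependence in the per-block bound would ruin the product.

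To conclude, observe that the penalty $2M^2\gamma$ is quadratic in $M$ while the H\"older cost $\lambda=M\theta/(1-\theta)$ is only linear. Hence choosing $M$ large enough (depending only on $\theta$ and $\gamma$) ensures $(e^\lambda-1)\,C'e^{-2M^2\gamma}\le 1$, giving the single-block bound $\bbE[g(\go)^{-\theta/(1-\theta)}]\le 2$ and therefore \eqref{eq:glocost}.

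I do not expect a serious obstacle in this proof: everything is forced by the calibration of $V(n)$. The delicate point (which is really the design choice, not the verification) is the exponent $1/(2\gamma)$, which is dictated by the fact that the union bound on pairs squares the tail probability, and the internal factor $\ell\log \ell$, which is precisely what is needed to cancel the $O(\ell\log \ell)$ coming from counting pairs at a prescribed separation. The prefactor $e^{M^2}$ (rather than linear in $M$) is what provides the quadratic margin in the exponent needed to overpower the linear cost $e^\lambda$.
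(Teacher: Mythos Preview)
Your proof is correct and follows essentially the same route as the paper: reduce via the i.i.d.\ block structure to a single-block bound, write the two-valued expectation as $1+(e^{M\theta/(1-\theta)}-1)\bbP[\cA_\ell]$, bound $\bbP[\cA_\ell]$ by a union bound over pairs using \eqref{defgamma}, and observe that the resulting $e^{-2\gamma M^2}$ beats the linear-in-$M$ H\"older cost for $M$ large. Your write-up is in fact slightly more explicit than the paper's about why the calibration of $V(n)$ makes the bound $\ell$-independent and why the quadratic prefactor $e^{M^2}$ is needed.
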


\begin{proposition}\label{prop:bouzouf}
Given $\eta>0$, there exists $M$ and $A$ such that for all $\gb\in(0,1]$ we have 

\begin{equation}\label{kouz}
\bbE\left[G_{\cI}(\go) Z^{\cI} \right]\le  \frac{C_{\ell} \eta^{|\cI|}}{\prod_{j=1}^{|\cI|} |i_{j}-i_{j-1}|^{1+\alpha}}.
\end{equation}

\end{proposition}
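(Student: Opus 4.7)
The plan is to reduce \eqref{kouz} to a one-block estimate by factorizing $Z^{\cI}$ along the coarse-grained renewal structure and exploiting the block-independence of the environment.

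First, decompose $Z^{\cI}$ by conditioning on the first and last contact inside each visited block. For $k \in \lint 1, |\cI|\rint$, let $a_k$ and $b_k$ denote the first and last points of $\tau \cap B_{i_k}$, with the convention $b_0 := 0$ and $b_{|\cI|} = N$. The renewal structure yields
\begin{equation}
Z^{\cI}(\go) = \sum_{(a_k, b_k)} \prod_{k=1}^{|\cI|} K(a_k - b_{k-1})\; \cZ^{(k)}_{a_k, b_k}(\go),
\end{equation}
where $\cZ^{(k)}_{a,b}(\go)$ is the disorder-weighted partition function for a renewal path from $a$ to $b$ confined to $B_{i_k}$, and depends only on $(\go_n)_{n \in B_{i_k}}$.

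Since $G_{\cI}(\go) = \prod_k g_{i_k}(\go)$ with each $g_{i_k}$ a function of $(\go_n)_{n \in B_{i_k}}$ only, the random variables $g_{i_k}(\go)\cZ^{(k)}_{a_k, b_k}(\go)$ are independent under $\bbP$. Taking expectation therefore factorizes as
\begin{equation}
\bbE\bigl[G_{\cI}(\go) Z^{\cI}\bigr] = \sum_{(a_k, b_k)} \prod_{k=1}^{|\cI|} K(a_k - b_{k-1})\; \bbE\bigl[g_{i_k}(\go)\, \cZ^{(k)}_{a_k, b_k}(\go)\bigr].
\end{equation}
Using $K(a_k - b_{k-1}) \le C\,(\ell(i_k - i_{k-1}))^{-(1+\alpha)}$ (valid because $a_k - b_{k-1}$ is comparable to $\ell(i_k - i_{k-1})$), the jump factors already supply the desired $\prod_k (i_k - i_{k-1})^{-(1+\alpha)}$, at the price of a spurious $\ell^{-(1+\alpha)|\cI|}$ that must be absorbed by summing the within-block factors. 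This is exactly the content of the one-block estimate (Proposition \ref{prop:binif}), which should assert that for $M$ and $A$ large enough and every relevant $i$,
\begin{equation}
\sum_{\substack{a, b \in B_{i}\\ a\le b}} \bbE\bigl[g(\go)\, \cZ_{a, b}(\go)\bigr] \le \eta\, \ell^{1+\alpha}.
\end{equation}
Granting this, multiplying over $k$ and collecting all $\ell$-dependent prefactors (in particular the boundary terms from the first and last blocks) into a single constant $C_\ell$ produces \eqref{kouz}.

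The main obstacle is the one-block estimate itself. Because we sit inside the critical window $h = h_\gb$ and $\ell = h_\gb^{-1}$, the annealed within-block partition function is of order one, and its dominant contributions come precisely from environments containing close pairs of heavy-tailed peaks. One must show that the penalization $g(\go) = \exp(-M \ind_{\cA_\ell})$, which deactivates configurations where some $\go_i, \go_j \ge V(|i-j|) = e^{M^2}(\ell (\log \ell) |i-j|)^{1/(2\gamma)}$, is strong enough to reduce the penalized annealed sum to $\eta\,\ell^{1+\alpha}$, while at the same time keeping the cost $\bbE[g^{-\theta/(1-\theta)}]$ from Proposition \ref{prop:kist} under control. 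The calibration of the exponent $1/(2\gamma)$ together with the logarithmic correction $\log \ell$ in $V$ is exactly what is needed to make these two competing requirements compatible in the marginal regime $\alpha = 1 - \gamma^{-1}$; this delicate balance, which replaces the quadratic/multilinear tilt of \cite{cf:BL, cf:GLT08, cf:GLT09} by a pair-extremal selection tailored to \eqref{defgamma}, is the novelty of the paper and is carried out in Section \ref{oneB}.
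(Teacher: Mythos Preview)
Your overall strategy (decompose $Z^{\cI}$ by first/last contacts in each visited block, factorize over blocks using independence, and reduce to a one-block estimate) is the same as the paper's. However, the specific decoupling you perform contains a genuine gap.

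The claim that $K(a_k-b_{k-1})\le C\,(\ell(i_k-i_{k-1}))^{-(1+\alpha)}$ because ``$a_k-b_{k-1}$ is comparable to $\ell(i_k-i_{k-1})$'' is false when $i_k-i_{k-1}=1$. For adjacent blocks, $a_k-b_{k-1}$ can be as small as $1$, so $K(a_k-b_{k-1})$ is of order $1$, not $\ell^{-(1+\alpha)}$. Your one-block estimate then produces a factor $\eta\,\ell^{1+\alpha}$ per block, and multiplying by the (incorrectly claimed) $\ell^{-(1+\alpha)}$ from $K$ would have cancelled the $\ell^{1+\alpha}$; but since that $\ell^{-(1+\alpha)}$ is not there for adjacent blocks, you are left with an uncontrolled factor $\ell^{1+\alpha}$ for each pair of adjacent occupied blocks. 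This destroys the bound.

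This is exactly why the paper does \emph{not} decouple the jump factor from the within-block sum. Instead (cf.\ \eqref{def:hatZ2}--\eqref{themidek}), it keeps the full expression
\[
\sum_{d_j,f_j\in B_{i_j}} (d_j-f_{j-1})^{-(1+\alpha)/2}(f_j-d_j+1)^{\alpha-1}\bigl[\eta+(1-\eta)\ind_{\{f_j-d_j\le\eta\ell\}}\bigr](d_{j+1}-f_j)^{-(1+\alpha)/2},
\]
splitting each $K$ into two half-powers attached to the neighbouring blocks, and then sums carefully over $d_j,f_j$ by a three-case analysis of where $d_j,f_j$ sit inside $B_{i_j}$. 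The smallness comes either from the Green's function $u(f_j-d_j)$ being integrated against the half-jump, or from the indicator $\{f_j-d_j\le\eta\ell\}$ restricting the range; in each case the powers of $\ell$ cancel correctly. Relatedly, the paper's one-block input (Proposition~\ref{prop:binif}) is stated for the \emph{normalized} partition function $Z^0_{[d,f]}$, giving $\bbE[g\,Z^0_{[d,f]}]\le\eta$ only when $f-d\ge\eta\ell$, and using the trivial bound $\le1$ otherwise; your summed version $\sum_{a,b}\bbE[g\,\cZ_{a,b}]\le\eta\,\ell^{1+\alpha}$ is a different statement and, even if true, would not by itself repair the adjacent-block problem.
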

It is quite straightforward using \eqref{eq:holder} to check that Proposition \ref{coarsegrained} 
is a consequence of Proposition \ref{prop:kist} and \ref{prop:bouzouf} with adequate changes for the value of $\eta$ and $C_{\ell}$.

\section{The costs and benefits of the penalization procedure}\label{kb}

\subsection{The proof of Proposition \ref{prop:kist}}

Using the fact that the environment is IID and the block structure of $G_{\cI}$, it is sufficient to show that 
\begin{equation}
 \bbE\left[ g(\go_1,\dots,\go_{\ell})^{\frac{\theta}{1-\theta}} \right]\le 2.
\end{equation}
We have 
\begin{equation}\label{woop}
 \bbE\left[ g(\go_1,\dots,\go_{\ell})^{\frac{\theta}{1-\theta}} \right]\le 1+  e^{\frac{M\theta}{1-\theta}}\bbP[\cA_{\ell}].
\end{equation}
Using a union bound and the tails distribution of the $\go$ \eqref{defgamma} (recall also \eqref{defV}) 
the probability above can be bounded as follows

\begin{multline}
 \bbP\left[  \cA_{\ell} \right]
\le   \sum_{1\le i<j \le \ell} \bbP\left[ \min(\go_i ,\go_j)\ge  V(j-i) \right]\\
\le  C  \frac{e^{-2\gamma M^2}}{\ell (\log \ell)} \sum_{1\le i<j \le \ell} \frac{1}{(j-i)}\le C'e^{-2\gamma M^2}
\end{multline}
Hence if $M$ is sufficiently large, the second term in \eqref{woop} is sufficiently small and we can conclude.
\qed

\subsection{The proof of Proposition \ref{prop:bouzouf}}

The main tool to prove Proposition \ref{prop:bouzouf} is the following results that shows that in each block where it is performed,
the penalization is effective.

\medskip

\noindent For any couple of integers $a<b$ we define
\begin{equation}
Z^h_{[a,b]}:= \bE\left[ \prod_{n\in \tau \cap [a, b] } e^h(1+\gb \go_n)  \ \middle| \ a,b\in\tau \right].
\end{equation}
We have 
\begin{equation}
Z^h_{[a,b]}= \frac{\bE\left[ \prod_{n\in \tau \cap [0, b-a] } e^h(1+\gb \go_{a+n}) \right]}{u(b-a)},
\end{equation}
where $u(n):=\bP[n\in \tau]$. Let us mention an asymptotic equivalent of $u(n)$ \cite[Theorem 1]{cf:doney},
which holds under assumption \ref{Kt} and is used in the rest of the proof
\begin{equation}\label{doney}
u(n)\stackrel{n\to \infty}{\sim} \frac{\alpha \sin(\pi \alpha)}{\pi C_K} (n+1)^{\alpha-1}.
\end{equation}
The main tool to prove Proposition \ref{prop:bouzouf} is the following result which quantifies how the multiplication by 
$g$ affects the expected value of the partition functions in 
a single block.

\begin{proposition}\label{prop:binif}
 Given $\eta$ and $M$, if $A$ is sufficiently large then for any $\gb\in(0,1]$, and $(d  ,  f) \in \lint 1, \ell \rint^2$ satisfying $(f-d)\ge \eta \ell$, we have 
 \begin{equation}\label{binn}
 \bbE\left[ g(\go_1,\dots,\go_{\ell})Z^0_{[d,f]}\right]\le \eta.
 \end{equation}
 \end{proposition}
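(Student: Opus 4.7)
The plan hinges on the identity $\bbE[Z^0_{[d,f]}]=1$, which follows directly from the definition: the $\omega_n$'s have mean zero and are independent of $\tau$, and the conditioning in $Z^0_{[d,f]}$ normalizes by $u(f-d)$. Writing $g=1-(1-e^{-M})\ind_{\cA_\ell}$ and using linearity,
\begin{equation*}
\bbE[gZ^0_{[d,f]}] \;=\; 1 \,-\, (1-e^{-M})\,\bbE[\ind_{\cA_\ell}Z^0_{[d,f]}].
\end{equation*}
Fixing $M$ first, large enough that $1-e^{-M}\ge 1-\eta/2$, it suffices to show $\bbE[\ind_{\cA_\ell}Z^0_{[d,f]}]\ge 1-\eta/2$: in other words, most of the annealed mass of $Z^0_{[d,f]}$ must be located on the dual-peak event. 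This is the heavy-tail analog of the tilt mechanism used in the Gaussian marginal case.

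The main tool is the polynomial chaos expansion
\begin{equation*}
Z^0_{[d,f]} \;=\; \sum_{S\subset\{1,\dots,f-d\}}\beta^{|S|}\,\bP[S\subset\tau\mid f-d\in\tau]\,\prod_{n\in S}\omega_{d+n}.
\end{equation*}
For a pair $(i,j)$ with $d<i<j<f$, $k:=j-i$, and $E_{i,j}:=\{\omega_i,\omega_j\ge V(k)\}$, the independence and mean-zero property of the $\omega_n$'s kill every summand except those indexed by $S\subset\{i-d,j-d\}$. The dominant survivor is $S=\{i-d,j-d\}$, yielding
\begin{equation*}
\bbE[\ind_{E_{i,j}}Z^0_{[d,f]}] \;\gtrsim\; \beta^2\,\bbE[\omega\ind_{\omega\ge V(k)}]^{2}\cdot \bP[\{i-d,j-d\}\subset\tau\mid f-d\in\tau].
\end{equation*}
Combining the tail equivalent $\bbE[\omega\ind_{\omega\ge V}]\sim \tfrac{\gamma C_\bbP}{\gamma-1}V^{1-\gamma}$ (from (\ref{defgamma})), the renewal asymptotic (\ref{doney}) for $\alpha=1-\gamma^{-1}$, and the designed shape $V(k)=e^{M^2}(\ell\log\ell\cdot k)^{1/(2\gamma)}$, the per-pair contribution evaluates, after the exponents cancel, to something of order $\beta^2 e^{-2(\gamma-1)M^2}\,(\log\ell)^{1/\gamma-1}\,(\ell k)^{-1}$.

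Passing from single pairs to $\cA_\ell$ uses the Bonferroni lower bound
\begin{equation*}
\ind_{\cA_\ell}\;\ge\;\sum_{(i,j)\in\mathcal P}\ind_{E_{i,j}}\;-\;\sum_{\{(i,j),(i',j')\}\subset\mathcal P}\ind_{E_{i,j}}\ind_{E_{i',j'}},
\end{equation*}
with $\mathcal P$ the pairs well inside $[d,f]$ with $1\le k\le\ell$. Summing the per-pair lower bound over $\mathcal P$, the extra factor $\sum_{i,k}$ produces an $\ell\cdot\log\ell$ from the pair count and the $\sum_k k^{-1}$, giving a main term of order $\beta^2 e^{-2(\gamma-1)M^2}(\log\ell)^{1/\gamma}$. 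The choice of $\ell_\beta$ in (\ref{mdef}) is designed precisely so that $\beta^2(\log\ell)^{1/\gamma}$ is a constant proportional to a positive power of $A$; choosing $A$ large enough in terms of $M$ and $\eta$ makes this main term exceed $2(1-\eta/2)$.

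The technical obstacle lies in the overlap correction. Two pair-events $E_{i,j}$ and $E_{i',j'}$ that share a common site have joint probability of order $V^{-\gamma}$ rather than $V^{-2\gamma}$, so a single peak does double duty and the correction contains terms heavier than a naive product bound. The remedy is to partition the pair-of-pairs by overlap type (disjoint, one common site, identical) and control each regime separately, matching all the resulting powers of $\log\ell$ that arise from renewal sums of type $\sum k^{-1}$. Verifying that this Paley--Zygmund-style correction is dominated by a small fraction of the squared main term—thereby allowing one to close the estimate $\bbE[\ind_{\cA_\ell}Z^0_{[d,f]}]\ge 1-\eta/2$—is the delicate heart of the one-block analysis in Section \ref{oneB}.
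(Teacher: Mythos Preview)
Your opening identity and the per-pair chaos computation are correct, but the argument does not close at the inclusion--exclusion step, and this is a genuine gap rather than a technicality. With your normalization the ``main term'' $T_1:=\sum_{(i,j)\in\mathcal P}\bbE[\ind_{E_{i,j}}Z^0_{[d,f]}]$ is of order $A^{1/\gamma}$ and thus diverges as $A\to\infty$; the Bonferroni lower bound $T_1-T_2$ is then useless, because $T_2\ge c\,T_1^2$ automatically (for disjoint pairs the events $E_{i,j},E_{i',j'}$ are \emph{positively} correlated under the size-biased measure $Z^0_{[d,f]}\bbP$, via the renewal), so $T_1-T_2\to-\infty$. Reinterpreting the step as Paley--Zygmund does not rescue it either: one would need the variance of $Y=\sum_{\mathcal P}\ind_{E_{i,j}}$ under $Z^0_{[d,f]}\bbP$ to be $o((\hat{\bbE}Y)^2)$, but the dominant contribution to $\hat{\bbE}Y$ is $\tilde p(V)^2\!\sum j^{-\alpha}\delta_{i,j}$ averaged over $\tau$, and the renewal sum $r^{-\alpha}(\log r)^{-1}\sum_{i\le r}\sum_{j} j^{-\alpha}\delta_{i,j}$ converges in law to a \emph{non-degenerate} positive variable (a functional of an $\alpha$-stable subordinator). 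This forces $\hat{\var}(Y)\asymp(\hat{\bbE}Y)^2$, so Paley--Zygmund only yields $\hat{\bbP}[Y>0]\ge c$ for some fixed $c<1$, never $\ge 1-\eta/2$.

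The paper's proof avoids exactly this obstruction by separating the two sources of randomness: one first conditions on $\tau$, so that the tilted law $\tilde{\bbP}_\tau$ on $\omega$ is a product measure and the $\omega$-variance of $Y(\omega,\tau)=\sum W(i,j)\delta_{i,j}$ is of order $A^{1/\gamma}$ while its $\omega$-mean is of order $A^{1/\gamma}$ as well, giving a ratio that vanishes; Chebyshev then yields $\tilde{\bbP}_\tau[Y\ge1]\ge 1-\varepsilon$ for \emph{typical} $\tau$. The renewal fluctuations that broke your one-shot argument are handled separately, by showing that the normalized renewal sum is bounded below with high $\bP$-probability (this is where the restriction $j\le r^{\alpha/4}$ and the removal of the bridge conditioning via an absolute-continuity lemma are used). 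Your proposal would work if you inserted this quenched-in-$\tau$ layer; without it the second-moment mechanism cannot reach probabilities arbitrarily close to $1$.
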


 We prove this result in the next section and show here simply how \eqref{kouz} can be deduced from it.
 We decompose $Z^{\cI}$ according to the first and last contact points in each block $(B_{i})_{i\in \cI}$ 
 where $\cI:=\{i_1,\dots, i_l\}$. We have
\begin{multline}
\label{def:hatZ}
Z^{\cI}:= \sumtwo{d_1,f_1 \in B_{i_1}}{d_1\leq f_1}  \cdots \sumtwo{d_l \in B_{i_l}}{f_l=N}
K(d_1) u(f_1-d_1) Z^h_{d_1,f_1} K(d_2-f_1) \\
 \cdots\  K(d_l-f_{l-1}) u(N-d_l ) Z^h_{d_l,N} \, .
\end{multline}
Then we use the fact that that due to our choice for the value of $h$, we have  
$Z^h_{d,f}\le e^{\ell h}Z^0_{d,f}=e Z^0_{d,f}$, for any $d$ and $f$ such that $(f-d)\le \ell$.
We obtain thus using the product structure of $G_{\cI}$ \eqref{eq:strucGI}
 
 \begin{multline}
\label{def:hatZ2}
\bbE\left[G_{\cI}(\go)Z^{\cI}\right]\le  e^{|\cI|}
\sumtwo{d_1,f_1 \in B_{i_1}}{d_1\leq f_1}  \cdots \sumtwo{d_l \in B_{i_l}}{f_l=N}
K(d_1) u(f_1-d_1) \bbE\left[ g_{i_1}(\go)Z^h_{[d_1,f_1]}\right] K(d_2-f_1) \\
 \cdots\  K(d_l-f_{l-1}) u(N-d_l ) \bbE \left[ g_{i_l}(\go) Z^h_{[d_l,N]}\right]
 \\
 \le  e^{|\cI|} \sumtwo{d_1,f_1 \in B_{i_1}}{d_1\leq f_1}  \cdots \sumtwo{d_l \in B_{i_l}}{f_l=N}
K(d_1) u(f_1-d_1) \left[\eta+(1-\eta)\ind_{\{(f_1-d_1)\le \eta \ell\} }\right] K(d_2-f_1) \\
 \cdots\  K(d_l-f_{l-1}) u(N-d_l )\left[\eta+(1-\eta)\ind_{\{(f_l-d_l)\le \eta \ell\}}\right],
\end{multline}
 where in the last line we used Proposition \ref{prop:binif}, and when $(f_j-d_j)\le \eta \ell$, the fact that 
  \begin{equation}
 \bbE\left[ g_{i_j}(\go)Z_{[d_j,f_j]}\right]\le  \bbE\left[ Z_{[d_j,f_j]}\right]=1.
 \end{equation}
Now we only need to obtain a bound on the r.h.s\ of \eqref{def:hatZ2}.
Using \eqref{Kt} and \eqref{doney}, we can replace $K(n)$ and $u(n)$ by $n^{-(\alpha+1)}$ and $(n+1)^{1-\alpha}$ 
($n+1$ is present instead of $n$ because we also consider 
$u(0)$) at the cost of loosing a constant factor per cell.
Thus we only need to prove that given $\delta>0$, if $\eta$ is sufficiently small, we have for some constant $C_{\ell}$ 
\begin{multline}\label{eq:gross}
 \sumtwo{d_1,f_1 \in B_{i_1}}{d_1\leq f_1}  \cdots \sumtwo{d_l \in B_{i_l}}{f_l=N}
(d_1)^{-(1+\alpha)} (f_1-d_1+1)^{\alpha-1} [\eta+(1-\eta)\ind_{\{(f_1-d_1)\le \eta \ell\} }] (d_2-f_1)^{-(1+\alpha)} \\
 \cdots\  (d_l-f_{l-1})^{-(1+\alpha)} (N-d_l+1)^{\alpha-1} [\eta+(1-\eta)\ind_{\{(f_l-d_l)\le \eta \ell\}}]
\\ \le C_{\ell} \delta^{|\cI|}\prod_{j=1}^l |i_{j}-i_{j-1}|^{-(1+\alpha)}.
\end{multline}
To obtain the bound, we proceed as
in the computation \cite[Equations (4.25) to (4.37)]{cf:BL}.
We are to prove that for all $i\in \{1,\dots,l-1\}$ we have, for any $f_{i-1}\in B_{i-1}$ (with the convention $f_0=0$) and $d_{i+1}\in B_{i+1}$,
provided that $\eta$ is sufficiently small
\begin{multline}\label{themidek}
  \sumtwo{d_j,f_j \in B_{i_j}}{d_j\le f_j}  (d_{j}-f_{j-1})^{-\frac{(1+\alpha)}{2}}  (f_{j}-d_{j}+1)^{\alpha-1} 
  [\eta+(1-\eta)\ind_{\{(f_i-d_i)\le \eta \ell\} }] (d_{j+1}-f_{j})^{-\frac{(1+\alpha)}{2}}
  \\
  \le \delta\left[ (i_j-i_{j-1})(i_{j+1}-i_j)\right]^{-(1+\alpha)}.
\end{multline}
Additionally we need two additional inequalities to bound the contribution of the first and last jump respectively.
The reader will readily check that
\begin{equation}\label{thene}
\begin{split}
  (d_{i_1})^{-\frac{(1+\alpha)}{2}}&\le i_1^{-\frac{(1+\alpha)}{2}},\\
 \sum_{d_l \in B_{m}} (d_l-f_{l-1})^{-\frac{(1+\alpha)}{2}} (N-d_l+1)^{\alpha-1}&\le \ell (m-i_{l-1})^{-\frac{(1+\alpha)}{2}}.
\end{split}
\end{equation}
Equation \eqref{eq:gross} follows by multiplying the three inequalities given in \eqref{themidek} and \eqref{thene}.

\medskip

Let us now prove \eqref{themidek}:
we split the set of indices $\{d_j,f_j \in B_{i_j} \ : \ d_j\le f_j\}$ in the r.h.s.\ of \eqref{themidek} into three subsets by adding an extra condition:
\begin{itemize}
 \item [(i)] $\{d_j,f_j \in B_{i_j} \ : \  (i_j-3/4)\ell \le d_j\le f_j\}$,
 \item [(ii)] $\{d_j,f_j \in B_{i_j} \ : \  d_j\le f_j\le  (i_j-3/4)\}$,
 \item [(iii)] $\{d_j,f_j \in B_{i_j} \ : \  f_j\ge d_j+\ell/2 \}$.
\end{itemize}
It is easy to check that the union of these (non disjoint) subsets give us back the original set.
We estimate the contribution of each set separately, the idea being that each condition in $(i)-(iii)$ allows 
to replace one of the variable factors by an asymptotic equivalent which does not depend on $d_i$ nor $f_i$.   This makes the computation easier.
First we can bound the contribution $(i)$ as follows
\begin{multline}
 \sumtwo{d_j,f_j \in B_{i_j}}{(i_j-3/4)\ell \le d_j\le f_j} \dots\\
 \le 
 [\ell(i_j-i_{j-1})/4]^{-\frac{(1+\alpha)}{2}} \sumtwo{d_j,f_j \in B_{i_j}}{d_j\le f_j} (f_{j}-d_{j}+1)^{\alpha-1}  
 [\eta+(1-\eta)\ind_{\{(f_1-d_1)\le \eta \ell\} }] (d_{j+1}-f_{j})^{-\frac{(1+\alpha)}{2}}
\end{multline}
Then considering the sum over $d_j$ separately, we obtain that the remaining double sum is smaller than 
\begin{equation}
 \left(\sum_{a=1}^{\eta\ell}a^{\alpha-1} + \eta \sum_{a=\eta \ell}^{\ell} a^{\alpha-1} \right)
 \left(\sum_{f_j\in B_{i_j}} (d_{j+1}-f_{j})^{-\frac{(1+\alpha)}{2}}\right).
\end{equation}
The first factor is smaller than $\gep \ell^{\alpha}$ where $\gep$ can be made arbitrarily small by considering small $\eta$, and 
the second factor is of order $|i_{j+1}-i_{j}|^{-\frac{(1+\alpha)}{2}} \ell^{\frac{1-\alpha}{2}}$.
All the powers of $\ell$ cancel out and we obtain that 
\begin{equation}
 \sumtwo{d_j,f_j \in B_{i_j}}{(i_j-3/4)\ell \le d_j\le f_j} \dots \le \frac{\delta}{3} \left[ (i_j-i_{j-1})(i_{j+1}-i_j)\right]^{-\frac{(1+\alpha)}{2}},
 \end{equation}
where the term in the sum in the same as in \eqref{themidek}.
We obtain similarly by symmetry
\begin{equation}
 \sumtwo{d_j,f_j \in B_{i_j}}{ d_j\le f_j\le  (i_j-1/4)\ell} \dots \le \frac{\gamma}{3} \left[ (i_j-i_{j-1})(i_{j+1}-i_j)\right]^{-\frac{(1+\alpha)}{2}}.
 \end{equation}
 Finally in the case $f_j-d_j\ge \ell/2$ we have  $$[\eta+(1-\eta)\ind_{\{(f_j-d_j)\le \eta \ell\} }] (f_{j}-d_{j})^{\alpha-1}\le \eta(\ell/2+1)^{\alpha-1}$$
and thus
 \begin{equation}
 \sumtwo{d_j,f_j \in B_{i_j}}{f_i\ge d_j +\ell/2} \dots 
 \le  \delta(\ell/2+1)^{1-\alpha}\sum_{d_j, f_j \in B_{i_j}} (d_{j}-f_{j-1})^{-\frac{(1+\alpha)}{2}}(d_{j+1}-f_{j})^{-\frac{(1+\alpha)}{2}}.
\end{equation}
The double sum factorizes and can be shown to be of order 
$$\ell^{1-\alpha}   \left[ (i_j-i_{j-1})(i_{j+1}-i_j)\right]^{-\frac{(1+\alpha)}{2}}.$$
This yields (when $\eta$ is sufficiently small)
\begin{equation}
 \sumtwo{d_j,f_j \in B_{i_j}}{f_j\ge d_j +\ell/2} \dots \le   \frac{\delta}{3} \left[ (i_j-i_{j-1})(i_{j+1}-i_j)\right]^{-\frac{(1+\alpha)}{2}}.
\end{equation}
This concludes the proof of \eqref{themidek} and thus of Proposition \ref{prop:bouzouf}.

\section{Proof of Proposition \ref{prop:binif}}\label{oneB}

\subsection{Reduction to a simpler statement}

The aim of this section is to reduce the proof of Proposition \ref{prop:binif} to the estimation of the probability of some nice event for the environment $\go$.
As $\bbE\left[ Z^0_{[d,f]}\right]=1$, $Z^0_{[d,f]}$
can be considered as a probability density.
To prove \eqref{binn} we must thus show that the probability of $\cA_{\ell}$ 
under the probability $Z^0_{[d,f]}(\go) \bbP[\dd \go]$ is close to one, whenever $(f-d)\ge \eta \ell$.

\medskip

\noindent More precisely, given a fixed realization of $\tau$, with $a,b\in \tau$ we define $\bbP^{a,b}_{\tau}$
\begin{equation}
 \frac{\dd \bbP^{d,f}_{\tau}}{\dd \bbP}(\go):= \prod_{n\in \tau \cap \lint d, f\rint} (1+\gb\go_n).
\end{equation}
We have 
\begin{equation}
   \bbE \left[  g(\go_1,\dots,\go_{\ell}) Z^h_{d,f} \right]=
   e^{-M}+ (1-e^{-M}) \bE\left[   \bbP^{d,f}_{\tau}(\cA^{\cc}_{\ell}) \ | \ d,f\in \tau \right].
\end{equation}
We notice that under $\bbP^{d,f}_{\tau}$, the $\go_n$s are still independent, but they are not identically distributed anymore, 
as for  $n\in [d,f]\cap \tau$, the distribution of $\go_n$ has been tilted and thus peaks are more likely to appear on those sites. 
In order to bound the probability of $\cA_{\ell}$
we are going to check only sites with tilted environment.
Let us consider the alternative event (recall \eqref{defV})
\begin{equation}
\cA (d,f,\tau):= \left\{ \exists i,j \in \tau \cap [d,f] , \ \min(\go_i,\go_j) \ge V(j-i) \right\}
\end{equation}
As $\cA (d,f,\tau)$ is clearly included in $\cA_{\ell}$,
it is sufficient for us to obtain a bound on $\bbP^{d,f}_{\tau}(\cA (d,f,\tau)^\cc).$
If we let $\tilde \bbP$ denote the probability obtained by tilting all the variables: the $\go_n$s are IID and with distribution 
\begin{equation}\label{fracs}
 \tilde \bbP[ \go_1\in \dd x]=(1+\gb x) \bbP[ \go_1\in \dd x],
\end{equation}
then we  have 
$$\bbP^{d,f}_{\tau}(\cA (d,f,\tau))=\tilde \bbP\left(\cA (d,f,\tau)\right).$$
Hence can prove Proposition \ref{prop:binif} provided we show
\begin{equation}\label{lapardeA}
  \bE \left[  \tilde \bbP( \cA (d,f,\tau))^{\cc} \ | \ d ,f \in \tau \right] \le \gep,
\end{equation}
for arbitrary $\gep$.
Without loss of generality, let us assume that $d=0$. We set for notational simplicity $r:=\eta\ell/4$ and  
we define a new event $\cB (r,\tau)$ satisfying $\cB (r,\tau)\subset \cA (0,f,\tau)$ for all $f\ge =\eta\ell/4$,
\begin{equation}
\cB (r,\tau):= \left\{ \exists i \in \lint 1,r\rint, \exists j\in \lint 1,r^{\alpha/4} \rint \ : \ i\in \tau  , \ i+j\in \tau \text{ and }
\ \min(\go_i,\go_{i+j})\ge V(j) \right\}.
\end{equation}
Furthermore as a random variable in $\tau$, is is measurable with respect to $\tau \cap [0,f/2]$.
We want to use this assumption to drop the conditioning in $\tau$ present in \eqref{lapardeA}.
The reason to consider only dual peaks with relatively small distance ($\le r^{\alpha/4}$) is not of crucial importance but 
it notably simplifies the computation (cf.\ \eqref{lahousse}).

\begin{lemma}\label{abscont}
 There exists a constant such that for all $N>0$ for any  function $F$ measurable with respect to
 $\sigma(\tau \cap [0,N/2])$
we have 
\begin{equation}\label{fouquet}
 \bE[ F(\tau) \ | \ N \in \tau ]\le C  \bE[F(\tau)]
\end{equation}
\end{lemma}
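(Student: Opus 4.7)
The plan is to establish the lemma by a direct uniform bound on the Radon--Nikodym derivative of the conditional law of $\tau \cap [0,N/2]$ given $\{N \in \tau\}$ with respect to its unconditional law. Since $F$ depends only on $\tau \cap [0,N/2]$, it suffices to show
\begin{equation*}
\bE\bigl[F(\tau)\ind_{\{N\in\tau\}}\bigr]\;\le\; C\,u(N)\,\bE[F(\tau)].
\end{equation*}
Conditioning on the configuration $A=\tau\cap[0,N/2]$ and setting $g:=\max A$, the Markov (renewal) property at time $g$ gives
\begin{equation*}
q_{N,g}\;:=\;\bP\bigl[N\in\tau \tc \tau\cap[0,N/2]=A\bigr]\;=\;\frac{\sum_{m=a+1}^{L} K(m)\,u(L-m)}{\bar K(a)},
\end{equation*}
where $a:=N/2-g$, $L:=N-g$, and $\bar K(a):=\bP[\tau_1>a]$; crucially, $q_{N,g}$ depends on $A$ only through $g$. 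The lemma will follow by summing over configurations once one establishes $q_{N,g}\le C\,u(N)$ uniformly for $g\in[0,N/2]$.

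The bound on $q_{N,g}$ will be obtained by splitting the sum in the numerator at $m=L/2$. For the small-$m$ part, the regular variation of $u$ from \eqref{doney} gives $u(L-m)\le C\,u(L)$ uniformly for $m\le L/2$, so that
\begin{equation*}
\sum_{m=a+1}^{L/2} K(m)\,u(L-m)\;\le\; C\,u(L)\sum_{m>a} K(m)\;\le\; C\,u(L)\,\bar K(a),
\end{equation*}
which after dividing by $\bar K(a)$ contributes at most $C\,u(L)\le C'u(N)$ (using $L\ge N/2$ and regular variation again). For the large-$m$ part, one uses $K(m)\le C\,L^{-(1+\alpha)}$ and $\sum_{k=0}^{L/2}u(k)\le C\,L^\alpha$ (a consequence of \eqref{doney}) to obtain a contribution of order $L^{-1}$. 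After dividing by $\bar K(a)$ and using $\bar K(a)\ge \bar K(N/2)\ge C N^{-\alpha}$ (valid for $a\le N/2$), this contribution is at most $C N^{\alpha-1}\le C'u(N)$. In the degenerate case $a=0$ (i.e.\ $g=N/2$), one has $\bar K(a)=1$ and the second term is $O(N^{-1})$, which is still dominated by $u(N)$ since $\alpha\in(0,1)$.

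The only mild obstacle is the uniformity across the regimes $g\ll N/2$ (where $a\sim N/2$, so $\bar K(a)$ is very small and one really needs the sharp size of the numerator) and $g$ near $N/2$ (where $\bar K(a)$ is of order one but $L-m$ can be large). Both cases are handled by the same two-piece decomposition, the key point being that the estimates in each regime combine into the single uniform bound $q_{N,g}\le C\,u(N)$. The inequality \eqref{fouquet} then follows from
\begin{equation*}
\bE\bigl[F(\tau)\ind_{\{N\in\tau\}}\bigr]=\sum_{A}F(A)\,\bP[\tau\cap[0,N/2]=A]\,q_{N,\max A}\;\le\; C\,u(N)\,\bE[F(\tau)],
\end{equation*}
and dividing both sides by $u(N)=\bP[N\in\tau]$ concludes the proof. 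For small $N$ the bound is trivial since $q_{N,g}\le 1$ and $u(N)$ is bounded away from zero, so a single constant $C$ can be chosen valid for all $N$.
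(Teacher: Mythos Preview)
Your proof is correct and follows essentially the same strategy as the paper: both condition on the last renewal point $g=\max(\tau\cap[0,N/2])$ and use the Markov property to reduce matters to a uniform bound of the form $\bP[N\in\tau\mid X_N=g]\le C\,u(N)$. The paper simply cites this bound from \cite[Equation (A.15)]{cf:DGLT07}, whereas you prove it from scratch via the two-piece decomposition of $\sum_{m=a+1}^{L}K(m)u(L-m)$; your argument is thus a self-contained version of the same proof.
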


\begin{proof}
If we let $X_N:=\max\{ \tau\cap [0,N/2]\}$, the left-hand side can be rewritten as 

\begin{multline}
\sum_{i=0}^{N/2} \bP[f(\tau) \ | \ X_i=N/2, N\in \tau] \bP[X_i=N/2 \ | \ N\in \tau]\\
=\sum_{i=0}^{N/2} \bP[f(\tau) \ | \ X_i=N/2] \bP[X_i=N/2 \ | \ N\in \tau],
\end{multline}
where the equality comes from the Markov property for the renewal $\tau$.
With this formulation, \eqref{fouquet} is simply a consequence of \cite[Equation (A.15)]{cf:DGLT07}.
 
\end{proof}
\noindent As a consequence of the Lemma we have 

\begin{equation}
\bE \left[  \tilde \bbP( \cA (0,f,\tau))^{\cc} \ | \ f \in \tau \right]\le \bE \left[  \tilde \bbP( \cB (r,\tau))^{\cc} \ | \ f \in \tau \right]
\le C \bE \left[  \tilde \bbP( \cB (r,\tau))^{\cc} \right]
\end{equation}

Hence to conclude the proof we only need to show the following which we leave to the next section.

\begin{lemma}\label{lapreuvebetau}
 Recall that $r=\eta \ell/4$. If $A$ is chosen sufficiently large (depending on $\eta$, $M$ and $\gep$), we have for all $\gb\in (0,1]$ 
  
  \begin{equation}
  \bE \left[  \tilde\bbP\left[ \cB (r,\tau)\right] \right]\ge 1-\gep.
 \end{equation}
 
\end{lemma}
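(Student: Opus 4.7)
My plan is a second-moment (Paley--Zygmund) argument on the number of ``dual peaks'' compatible with the renewal $\tau$. Set $r=\eta\ell/4$ and, for $(i,j)\in\lint 1,r\rint\times\lint 1,r^{\alpha/4}\rint$, let
\[
X_{i,j} := \ind_{\{i\in\tau,\, i+j\in\tau\}}\cdot\ind_{\{\min(\go_i,\go_{i+j})\ge V(j)\}},
\qquad
N := \sum_{i,j} X_{i,j},
\]
so that $\cB(r,\tau)=\{N\ge 1\}$. Denoting by $\bar E := \bE\otimes\tilde\bbE$ the product expectation, Paley--Zygmund gives
\[
\bar E[\ind_{\{N\ge 1\}}]\;\ge\;\frac{(\bar E[N])^2}{\bar E[N^2]},
\]
so it suffices to prove that $\bar E[N]\to\infty$ as $A\to\infty$ (uniformly in $\gb\in(0,1]$) and that $\bar E[N^2]\le(1+o(1))(\bar E[N])^2$.

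First I would compute the mean. On the disorder side, integrating by parts in \eqref{fracs} against \eqref{defgamma} yields the tilted tail
\[
\tilde\bbP[\go_1\ge x]\;\sim\;\frac{\gamma\,C_\bbP}{\gamma-1}\,\gb\,x^{1-\gamma}\qquad(x\to\infty),
\]
which, using $(1-\gamma)/\gamma=-\alpha$ and \eqref{defV}, gives $\tilde\bbP[\min(\go_i,\go_{i+j})\ge V(j)]\asymp c(M)\gb^2(\ell\log\ell)^{-\alpha}j^{-\alpha}$. On the renewal side, Doney's estimate \eqref{doney} gives $u(n)\sim cn^{\alpha-1}$. Combining the two,
\[
\bar E[N]
\;\asymp\;c(M)\gb^2(\ell\log\ell)^{-\alpha}
\Bigl(\sum_{i=1}^{r}i^{\alpha-1}\Bigr)\Bigl(\sum_{j=1}^{r^{\alpha/4}}j^{-1}\Bigr)
\;\asymp\;c(M,\eta)\,\gb^2(\log\ell)^{1-\alpha},
\]
since the $i$-sum is $\asymp r^{\alpha}\asymp\eta^{\alpha}\ell^{\alpha}$ and the $j$-sum is $\asymp\log\ell$. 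The combination $\gb^2(\log\ell)^{1-\alpha}$ is precisely what the choice of $h_\gb$ (whence of $\ell$) is calibrated to make grow like $A^{1-\alpha}$ uniformly in $\gb$; in particular $\bar E[N]$ can be made as large as one wishes by taking $A$ large.

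For the second moment I would expand
\[
\bar E[N^2]
=\sum_{(i,j),(i',j')}\bP[\{i,i+j,i',i'+j'\}\subset\tau]
\cdot\tilde\bbE\bigl[\ind_{\{\min(\go_i,\go_{i+j})\ge V(j)\}}\ind_{\{\min(\go_{i'},\go_{i'+j'})\ge V(j')\}}\bigr]
\]
and split pairs according to whether the index sets $\{i,i+j\}$ and $\{i',i'+j'\}$ are disjoint or overlap. When disjoint, independence of the $\go_n$'s under $\tilde\bbP$ factorizes the disorder factor, and the renewal Markov property factorizes the $\tau$-factor into a product of four $u(\cdot)$'s; since $u(i'-i-j)/u(i')$ is of order $1$ on the bulk of the sum (where $i+j\ll i'$), this part is at most $(1+o(1))(\bar E[N])^2$.

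The main obstacle is the overlapping contribution, and this is where the precise form of $V$ in \eqref{defV} has been designed to pay off. Going through the finitely many coincidence types ($i'=i$, $i+j=i'$, $i=i'+j'$, $i+j=i'+j'$, and double overlaps), each shared site is forced above $\max(V(j),V(j'))$, which removes one tilted-tail factor compared to the disjoint case but adds an extra $u(\cdot)$ factor in the renewal part. Using the tilted tail asymptotic, the explicit form $V(j)\propto(\ell\log\ell\cdot j)^{1/(2\gamma)}$, and the renewal factorization, each such sum is bounded by a quantity of strictly smaller order than $(\bar E[N])^2$ as $A\to\infty$; here the $j^{1/(2\gamma)}$-power in $V$ (which exactly matches, via $(1-\gamma)/\gamma=-\alpha$, the ``cost'' of sharing a peak to the gain of one renewal point) and the truncation $j\le r^{\alpha/4}$ (which prevents the overlap double-sums from blowing up; compare \eqref{lahousse}) are both essential. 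Once the overlap is under control, Paley--Zygmund concludes.
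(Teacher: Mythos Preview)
Your second-moment/Paley--Zygmund scheme under the \emph{product} measure $\bar E=\bE\otimes\tilde\bbE$ cannot reach the conclusion $\bar E[\ind_{\{N\ge 1\}}]\ge 1-\gep$ for arbitrary $\gep$, because the renewal part carries nondegenerate fluctuations at the scale of its mean. Concretely, write $M(\tau):=\tilde\bbE[N\mid\tau]\asymp c\,\gb^{2}(\ell\log\ell)^{-\alpha}\sum_{i,j}\delta_{i,j}\,j^{-\alpha}$. The paper shows (Proposition~\ref{thechosenone}) that the normalized sum $r^{-\alpha}(\log r)^{-1}\sum_{i,j}\delta_{i,j}j^{-\alpha}$ has the same limit in law as $r^{-\alpha}\sum_{i\le r}\delta_i$, namely a strictly positive but \emph{nondegenerate} random variable (the inverse of an $\alpha$-stable hitting time). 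Hence
\[
\frac{\bar E[N^2]}{(\bar E[N])^2}
\;\ge\;\frac{\bE[M(\tau)^2]}{(\bE[M(\tau)])^2}
\;\longrightarrow\;\frac{\bE[X^2]}{(\bE[X])^2}>1
\quad\text{as }A\to\infty,
\]
and Paley--Zygmund only delivers a fixed constant lower bound strictly below $1$. Your sentence ``since $u(i'-i-j)/u(i')$ is of order $1$ on the bulk \dots\ this part is at most $(1+o(1))(\bar E[N])^2$'' is exactly where this fails: ``of order $1$'' is true, but $(1+o(1))$ is not, and integrating those bounded ratios over the disjoint pairs reproduces $\bE[L_r^2]/(\bE[L_r])^2$, which is bounded away from $1$.

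The paper circumvents this by \emph{conditioning on $\tau$ first}: one shows that with $\bP$-probability at least $1-\gep$ the conditional mean $\tilde\bbE[Y\mid\tau]$ is at least $\gep A^{1/\gamma}$ (this is the nontrivial point~(ii) of Proposition~\ref{thechosenone}, using convergence to the a.s.\ positive limit), and separately that $\bE[\var_{\tilde\bbP}(Y\mid\tau)]\le C A^{1/\gamma}$ (points (i) and (iii)), so by Markov the conditional variance is small with high $\bP$-probability. Chebyshev in $\tilde\bbP$ alone, on the intersection of these two $\tau$-events, then gives $\tilde\bbP[Y\ge 1]\ge 1-\gep$. Your overlap analysis is essentially what the paper does for the \emph{conditional} variance; the missing ingredient is the quenched-in-$\tau$ structure and the lower-tail input~(ii).
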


\subsection{Proving Lemma \ref{lapreuvebetau}}

For $i,j,k$ in $\bbN$ let $\delta_{i}$, $\delta_{i,j}$ and $\delta_{i,j,k}$ be the indicator function of the respective events
$\{i \in \tau\}$, $\{i,i+j\in \tau\}$ and $\{i,i+j,i+j+k\in \tau\}$.
Using the independence of renewal jumps we have (recall the definition of $u(n)$ above \eqref{doney})
\begin{equation}\label{zouz}
 \bE[\delta_i]=u(i), \quad  \bE[\delta_{i,j}]=u(i)u(j), \ \text{ and } \ \bE[\delta_{i,j,k}]=u(i)u(j)u(k).
\end{equation}

Let us also set (recall \eqref{defV}) 
\begin{equation}
 W(i,j):=\ind_{\{\min(\go_i,\go_{i+j}\ge V(j)\}}
\end{equation}
We define
\begin{equation}
 Y(\go,\tau):=\sum_{i=1}^r \sum_{j=1}^{r^{\alpha/4}} W(i,j) \delta_{i,j}.
\end{equation}
With this notation the event $\cB (r,\tau)$ corresponds to $Y(\go,\tau)\ge 1$.
We are going to prove the lemma by controlling the two first moments of $Y$ w.r.t measure $\tilde \bbP$.

\medskip

We are going to use constantly the following estimates which can be deduced from the assumption \eqref{defgamma} the definition of the size 
biased measure and the value chosen for $\ell$: there exists a constant (depending on $M$) such that for every value of $i,j$ and $\gb\in(0,1]$ chosen we have (recall
$\alpha=1-\gamma^{-1}$) 
\begin{equation}\label{hola}
 (C_M)^{-1}\gb \left(\ell(\log \ell) n\right)^{-\frac{\alpha}{2}}   \le \tilde \bbP[  \go_1\ge V(n)]\le C_M\gb \left(\ell (\log \ell) n\right)^{-\frac{\alpha}{2}}  
\end{equation}

From now on, all the constant displayed in the equation might depend on $M$ and $\eta$ but not on other parameters.
Using \eqref{hola}  we have for some $c>0$
\begin{equation}
 \tilde\bbE[Y]\ge c \gb^2  (\ell \log \ell)^{-\alpha}\sum_{j=1}^{r^{\alpha/4}} j^{-\alpha} \delta_{i,j}.
\end{equation}
To compute the variance, we ignore after developing $Y^2=\sum_{i=1}^r \sum_{j=1}^{r^{\alpha/4}}\sum_{i'=1}^r \sum_{j'=1}^{r^{\alpha/4}}\dots$ all the terms which have covariance zero.
We are left with diagonal terms but also terms where $|\{i,i+j\} \cap \{i',i'+j'\}|=1$. Reordering the sum this gives the following estimate

\begin{multline}
  \var_{\tilde \bbP}[Y(\tau,\go)]\le \tilde \bbE \left[ \sum_{i=1}^r \sum_{j=1}^{r^{\alpha/4}} W(i,j) \delta_{i,j} \right]
  \\
  + 2 \tilde \bbE \left[ \sum_{i=1}^r \sum_{1\le j<k\le r^{\alpha/4}}  W(i,j)W(i,k) \delta_{i,j}\delta_{i,k} \right]\\
  + 2 \tilde \bbE \left[ \sum_{i=1}^r\sum_{j=1}^{r^{\alpha/4}}\sum_{k=1}^{r^{\alpha/4}}  W(i,j)W(i+j,k) \delta_{i,j,k} \right]
\end{multline}
Using \eqref{hola} to control all the expectation we obtain
\begin{multline}
 \var_{\tilde \bbP}[Y(\tau,\go)]\le C \gb^2  (\ell \log \ell)^{-\alpha} \sum_{i=1}^r \sum_{j=1}^{r^{\alpha/4}} j^{-\alpha} \delta_{i,j}\\
 + C \gb^3  (\ell \log \ell)^{-3\alpha/2} 
 \Bigg( \sum_{i=1}^r \sum_{1\le j<k\le r^{\alpha/4}} j^{-\alpha/2}k^{-\alpha}  \delta_{i,j,k-j}
         \\ + \sum_{i=1}^r\sum_{j=1}^{r^{\alpha/4}}\sum_{k=1}^{r^{\alpha/4}}  j^{-\alpha/2} k^{-\alpha/2} \max(j,k)^{-\alpha/2} \delta_{i,j,k} \Bigg).
 \end{multline}
 To conclude the proof of Lemma \ref{lapreuvebetau} we use the following estimates proved in the next section.
 
\begin{proposition}\label{thechosenone}
The following estimates hold for some universal constant $C$
\begin{itemize}
 \item [(i)] $\bE[\sum_{i=1}^r \sum_{j=1}^{r^{\alpha/4}} j^{-\alpha} \delta_{i,j}]\le C r^{\alpha}(\log r)$.
 \item [(ii)] There exists $\gep>0$ such that for all $r$ sufficiently large
 $$\bP\left[r^{-\alpha}(\log r)^{-1} \sum_{i=1}^r \sum_{j=1}^{r^{\alpha/4}} j^{-\alpha} \delta_{i,j} \ge \gep\right]\ge 1-\gep,$$
 \item [(iii)]
 \begin{equation} \begin{split}
 \bE\left[ \sum_{i=1}^r \sum_{1\le j<k\le r^{\alpha/4}} j^{-\alpha/2}k^{-\alpha}  \delta_{i,j,k-j} \right] & \le C r^{3\alpha/2}(\log r),\\
 \bE\left[ \sum_{i=1}^r\sum_{j=1}^{r^{\alpha/4}}\sum_{k=1}^{r^{\alpha/4}}  j^{-\alpha/2} k^{-\alpha/2} \max(j,k)^{-\alpha/2} \delta_{i,j,k} \right] 
 &\le C r^{3\alpha/2}(\log r).
  \end{split}
  \end{equation}
\end{itemize}

\end{proposition}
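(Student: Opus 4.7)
My plan is to treat all three parts using the renewal identities~\eqref{zouz} together with the Doney asymptotic $u(n)\sim c\,n^{\alpha-1}$ from~\eqref{doney}; parts (i) and (iii) come out as direct first-moment estimates, while (ii) is the substantive point and will need a second-moment step plus an argument to extract the high-probability content.

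For (i), substituting $\bbE[\delta_{i,j}]=u(i)u(j)$ and separating the two sums gives
$$
\bbE\Big[\sum_{i,j}j^{-\alpha}\delta_{i,j}\Big]\le C\Big(\sum_{i=1}^r i^{\alpha-1}\Big)\Big(\sum_{j=1}^{r^{\alpha/4}}j^{-1}\Big)\le C'\,r^\alpha\log r.
$$
For (iii), using $\bbE[\delta_{i,j,k-j}]=u(i)u(j)u(k-j)$ together with the Beta-type estimate $\sum_{j=1}^{k-1}j^{\alpha/2-1}(k-j)^{\alpha-1}\asymp k^{3\alpha/2-1}$ bounds the first triple sum by $C\,r^{\alpha}\sum_{k\le r^{\alpha/4}} k^{\alpha/2-1}\asymp r^{\alpha+\alpha^{2}/8}$, which is $o(r^{3\alpha/2})$ since $\alpha<1$ and hence absorbed into the target $Cr^{3\alpha/2}\log r$. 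The second triple sum is handled identically after splitting on $j\le k$ versus $j>k$ and using $u(i)u(j)u(k)$ or $u(i)u(k)u(j-k)$ accordingly.

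For (ii), set $S:=\sum_{i,j} j^{-\alpha}\delta_{i,j}$, so that $\bbE[S]\asymp r^\alpha\log r$ by (i). The second-moment estimate $\bbE[S^2]\le C(\bbE[S])^2$ follows by expanding
$$
\bbE[S^2]=\sum_{i,j,i',j'} j^{-\alpha}(j')^{-\alpha}\,\bbP\big[\{i,i+j,i',i'+j'\}\subset\tau\big]
$$
and case-analysing the ordering of the (at most) four points: the renewal Markov property gives a product of four $u$'s in each case; the ``separated'' ordering $i<i+j<i'<i'+j'$ together with its mirror factorizes into $(\bbE[S])^{2}$ up to a constant, while the interlaced and coincidence orderings lose one long renewal factor and contribute strictly lower order. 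Paley--Zygmund then produces constants $c_{1},c_{2}>0$ with $\bbP[S\ge c_{1}\,r^{\alpha}\log r]\ge c_{2}$. To upgrade this positive-probability bound to the high-probability form of~(ii), I would write $S=\sum_{i\in\tau\cap[1,r]} T_i$ with $T_{i}=\sum_{j\le r^{\alpha/4}} j^{-\alpha}\ind{i+j\in\tau}$, and argue that $S/(r^{\alpha}\log r)$ converges in distribution to $c\cdot M$, where $M$ is the Mittag-Leffler limit of $|\tau\cap[0,r]|/r^{\alpha}$ (using a self-averaging $T_{i}/\log r\to c$ under the Palm measure, combined with the classical Mittag-Leffler convergence of $N_{r}/r^{\alpha}$). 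Since $\bbP[M<\gep]\to 0$ as $\gep\to 0^{+}$, this delivers the tightness-from-below of $S/(r^{\alpha}\log r)$ asserted in~(ii).

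The main obstacle is precisely this upgrade in (ii). Plain Chebyshev cannot concentrate $S/(r^{\alpha}\log r)$ around a deterministic value because even $N_{r}/r^{\alpha}$ does not concentrate---it converges to a non-degenerate Mittag-Leffler law---so the correct content of (ii) is merely the tightness of $S/(r^{\alpha}\log r)$ with no mass escaping to~$0$. The technical difficulty in the convergence argument is to show that the random quantities $T_{i}$ self-average at rate $\log r$ uniformly in $i\in\tau\cap[1,r]$, since the inner sums defining them share renewal information across overlapping windows of width $r^{\alpha/4}$ and are only weakly independent.
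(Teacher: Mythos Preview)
Your treatment of (i) and (iii) matches the paper's: both are direct first-moment computations via \eqref{zouz} and \eqref{doney}. Your Beta-type bound in (iii) actually yields a smaller power $r^{\alpha+\alpha^2/8}$ than the paper's $r^{3\alpha/2}\log r$, which is fine.

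For (ii), your overall plan---show that $S/(r^\alpha\log r)$ converges in law to a strictly positive variable by comparing with $N_r/r^\alpha$---is exactly the paper's strategy, but the Paley--Zygmund detour is superfluous: it only gives positive probability, and you then need the convergence argument anyway. The paper skips straight to the comparison. Setting $X^1_r:=r^{-\alpha}\sum_{i=1}^r\delta_i$ and $X^2_r:=\big(\sum_{k\le r^{\alpha/4}}k^{-\alpha}u(k)\big)^{-1}r^{-\alpha}S$, it shows $\bE[(X^1_r-X^2_r)^2]\to 0$, so $X^2_r$ inherits the limit of $X^1_r$ (the hitting time of $[1,\infty)$ by the $\alpha$-stable subordinator, a.s.\ positive---equivalently your Mittag--Leffler $M$).

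The self-averaging difficulty you flag is real, and the paper resolves it by a short $L^2$ orthogonality argument rather than any uniform-in-$i$ control of your $T_i$. Writing
\[
U_i:=\delta_i\sum_{j=1}^{r^{\alpha/4}}j^{-\alpha}\big(\delta_{i+j}-u(j)\big),
\]
one has $r^\alpha\big(\sum_k k^{-\alpha}u(k)\big)(X^2_r-X^1_r)=\sum_{i=1}^r U_i$. The key observation is that $\bE[U_{i_1}U_{i_2}]=0$ whenever $|i_1-i_2|\ge r^{\alpha/4}$: if $i_1<i_2$ then $U_{i_1}$ is $\sigma(\tau\cap[0,i_2])$-measurable, and conditioning on $\tau\cap[0,i_2]$ the renewal Markov property gives $\bE[\delta_{i_2+j}\mid i_2\in\tau]=u(j)$, so the centered inner sum in $U_{i_2}$ has conditional mean zero. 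For the diagonal, the crude bound $|U_i|\le r^{\alpha/4}\delta_i$ gives $\bE[U_i^2]\le Cr^{\alpha/2}i^{\alpha-1}$. Combining, $\sum_{i_1,i_2}\bE[U_{i_1}U_{i_2}]\le Cr^{\alpha/4}\sum_i\bE[U_i^2]\le Cr^{7\alpha/4}$, which after dividing by $r^{2\alpha}(\log r)^2$ tends to zero. This is the missing ingredient in your sketch; once you have it, (ii) follows immediately from the convergence of $X^1_r$.
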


\medskip 

\noindent From $(ii)$ we obtain directly that provided $\gep$ is sufficiently small (how small can depend on $\eta$ and $M$) with probability larger than $(1-\gep)$ we have
\begin{equation}
 \tilde\bbE[Y(\tau,\go)]\ge \gep \gb^2 (\log \ell)^{\gamma^{-1}}
 \end{equation}
 From $(i)$ and $(iii)$, and Markov inequality we obtain that with probability larger than $1-\gep$ we have
 \begin{equation}
   \var_{\tilde \bbP}[Y(\go,\tau)]\le C(\eta,M)\gep^{-1}\left[  \gb^2 (\log \ell)^{\gamma^{-1}}+\gb^3(\log \ell)^{\gamma^{-1}-\alpha/2} \right].
 \end{equation}
 With our choice $\ell= e^{-A \gb^{2\gamma}}$ with probability larger than $(1-2\gep)$ we have 
 \begin{equation}\label{dsadsa}\begin{split}
   \tilde\bbE[Y(\tau,\go)]&\ge \gep A^{\gamma^{-1}},\\
      \var_{\tilde \bbP}[Y(\tau,\go)]&\le C(\eta,M)\gep^{-1} A^{\gamma^{-1}}+ O( \gb^{1+\alpha \gamma}).
      \end{split}
 \end{equation}
Thus by choosing $A$ sufficiently large (depending on $\eta$, $M$ and $\gep$), using Chebychev inequality we conclude that 

$$\bP\left[  Y(\tau,\go)\ge 1\right] \le 3 \gep.$$
 \qed 
 \subsection{Proof of Proposition \ref{thechosenone}}

We start with point $(i)$ and $(iii)$ which are simpler to prove.
Using \eqref{zouz} to rewrite the sum in $(i)$ and \eqref{doney} to estimate it, we obtain
 \begin{equation}
\sum_{i=1}^r \sum_{j=1}^{r^{\alpha/4}} u(i)u(j)j^{-\alpha} \le 
C \sum_{i=1}^r i^{\alpha-1} \sum_{j=1}^{r^{\alpha/4}} j^{-1}, 
\end{equation}
For $(iii)$ let us perform the computation only for the first sum since the other is similar.
 Using \eqref{zouz} and \eqref{doney} we have 
 \begin{multline}
  \sum_{i=1}^r \sum_{1\le j<k\le r^{\alpha/4}} u(i)u(j)u(k-j) j^{-\alpha/2}k^{-\alpha} 
  \\\le C\sum_{i=1}^r i^{\alpha-1} \sum_{j=1}^{r^{\alpha/4}} j^{-\alpha/2-1} \sum_{k=j+1}^{r^{\alpha/4}} (k-j)^{-1}
  \le C' r^{3\alpha/2}\log r.
  \end{multline}
Let us now consider the more delicate point $(ii)$.
 We set 
 \begin{equation}\begin{split}
 X^1_r&:= r^{-\alpha} \sum_{j=1}^{r} \delta_i,\\
 X^2_r&:= \left( \sum_{k=1}^{r^{\alpha/4}} k^{-\alpha} u(k) \right)^{-1} r^{-\alpha} \sum_{i=1}^{r} \sum_{j=1}^{r^{\alpha/4}} j^{-\alpha} \delta_{i,j}.
 \end{split}\end{equation}
  Note that as $\left( \sum_{k=1}^{r^{\alpha/4}} k^{-\alpha} u(k) \right)^{-1}$ is of order $\log r$, $X^2_r$ is asymptotically equivalent to the expression appearing in 
  $(ii)$. Hence it is sufficient to prove that 
  \begin{equation}
  \lim_{r\to \infty} \bP\left[X^2_r  \ge \gep\right]\ge 1-\gep.
  \end{equation}
We are going to show that $X^2_r$ converges in law and that the limit distribution does not give any mass to zero.
First we notice that as $n^{-1/\alpha}\tau_{\lceil n \rceil}$ converges to an $\alpha$ stable subordinator (see e.g. \cite[Chap. 16]{cf:Feller})
$X^1_r$ converges to the first hitting time of $[1,+\infty)$ for this process which is an almost surely positive random variable. 
Hence we conclude the proof using the following technical lemma, which readily implies that $X^2_r$ converges in distribution to the same random variable.
\begin{lemma}
 
 We have 
 
 \begin{equation}
  \lim_{r\to \infty} \bE\left[(X^1_r-X^2_r)^2\right]=0.
 \end{equation}

\end{lemma}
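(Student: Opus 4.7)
The plan is to expand $(X^1_r - X^2_r)^2$ and exploit the strong Markov property of $\tau$. Set $S_r := \sum_{k=1}^{r^{\alpha/4}} k^{-\alpha} u(k)$ (which is of order $\log r$ by \eqref{doney}), write $\delta_{i,j} = \delta_i \delta_{i+j}$ with $\delta_n := \ind\{n \in \tau\}$, and introduce
$$T_i := \frac{1}{S_r}\sum_{j=1}^{r^{\alpha/4}} j^{-\alpha}\,\delta_{i+j},$$
so that $X^2_r - X^1_r = r^{-\alpha}\sum_{i=1}^r \delta_i(T_i - 1)$. By strong Markov at $i$, on the event $\{i\in\tau\}$ the variable $T_i$ has the same law as $T := S_r^{-1}\sum_{j=1}^{r^{\alpha/4}} j^{-\alpha}\delta_j$ computed on a fresh independent renewal; in particular $\bE[T_i\mid i\in\tau]=1$, which already gives $\bE[X^1_r] = \bE[X^2_r]$.

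Expanding the square yields
$$\bE[(X^1_r - X^2_r)^2] = r^{-2\alpha}\sum_{i,i'=1}^r \bE[\delta_i\delta_{i'}(T_i-1)(T_{i'}-1)].$$
For the diagonal terms, $\bE[\delta_i(T_i-1)^2] = u(i)\,\bE[(T-1)^2]$, and a direct computation based on $u(k)\sim c\,k^{\alpha-1}$ shows $\bE[T^2]=O(1)$ uniformly in $r$, so the diagonal contribution is $O(r^{-\alpha})$. For $i<i'$, a second application of strong Markov at $i$ gives
$$\bE[\delta_i\delta_{i'}(T_i-1)(T_{i'}-1)] \;=\; u(i)\,\bE\bigl[\delta_d(T-1)(T^{(d)}-1)\bigr],\qquad d := i'-i,$$
where $T^{(d)} := S_r^{-1}\sum_{j=1}^{r^{\alpha/4}} j^{-\alpha}\delta_{d+j}$ is evaluated on the same fresh renewal.

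The key dichotomy is the comparison of $d$ with $r^{\alpha/4}$. When $d>r^{\alpha/4}$, the quantity $\delta_d\,T$ is $\sigma(\tau\cap[0,d])$-measurable (since $T$ only reads off $\tau\cap[1,r^{\alpha/4}]\subset[0,d]$), whereas by strong Markov at $d$ the variable $T^{(d)}$ is, conditionally on $\{d\in\tau\}$, independent of $\tau\cap[0,d]$ and has mean $1$; combined with $\delta_d = 0$ off $\{d\in\tau\}$ this forces the expectation to vanish, so the well-separated regime contributes exactly $0$. When $1\le d\le r^{\alpha/4}$, I bound
$$\bigl|\bE[\delta_d(T-1)(T^{(d)}-1)]\bigr| \;\leq\; u(d) + \bE[\delta_d T] + \bE[\delta_d T^{(d)}] + \bE[\delta_d T T^{(d)}],$$
and a careful expansion using $\bE[\delta_a\delta_b]=u(a\wedge b)u(|a-b|)$ and the three-point identity $\bE[\delta_a\delta_b\delta_c] = u(a)u(b-a)u(c-b)$ for $a<b<c$, together with $u(n)\sim c\,n^{\alpha-1}$, shows each of the four terms is bounded by $C\,u(d)$ with $C$ independent of $r$ (the two factors of $S_r^{-1}$ absorbing the two logarithmic divergences produced by the sums over $j$ and $k$). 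Summing,
$$r^{-2\alpha}\cdot 2\sum_{i=1}^r u(i)\sum_{d=1}^{r^{\alpha/4}} C\,u(d) \;=\; O\!\bigl(r^{-\alpha(1-\alpha/4)}\bigr)\;\longrightarrow\;0.$$

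The main obstacle is the close-pair estimate $\bE[\delta_d T T^{(d)}] \le C\,u(d)$: one must split the triple sum $\sum_{j,k} j^{-\alpha}k^{-\alpha}\bE[\delta_d\delta_j\delta_{d+k}]$ according to the six possible orderings of the (generically distinct) points $\{d,\,j,\,d+k\}$, verify in each case that $u\sim n^{\alpha-1}$ produces at most two logarithmic factors (which $S_r^{-2}$ cancels) beyond the single factor $u(d)$, and handle the degenerate orderings $j\in\{d,\,d+k\}$ separately. Once this is done, combining the three regimes—diagonal ($O(r^{-\alpha})$), well-separated pair (exactly $0$), and close pair ($O(r^{-\alpha(1-\alpha/4)})$)—yields $\bE[(X^1_r-X^2_r)^2]\to 0$.
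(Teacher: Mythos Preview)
Your argument is correct and follows the same skeleton as the paper's proof: write $X^2_r-X^1_r=r^{-\alpha}\sum_i \delta_i(T_i-1)$, expand the square, use the strong Markov property at the smaller index to reduce the cross term to a functional of a fresh renewal, observe that the covariance \emph{vanishes exactly} when $d=i'-i>r^{\alpha/4}$ (because then $\delta_d(T-1)$ is $\sigma(\tau\cap[0,d])$-measurable while $T^{(d)}-1$ has conditional mean zero), and control the remaining close pairs by direct estimation.

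The only substantive difference is in the treatment of the close-pair regime $1\le d\le r^{\alpha/4}$. The paper bypasses your triple-point computation entirely: setting $U_i:=\delta_i\sum_{j\le r^{\alpha/4}} j^{-\alpha}(\delta_{i+j}-u(j))$, it uses the crude pointwise bound $|U_i|\le r^{\alpha/4}\delta_i$, hence $\bE[U_i^2]\le r^{\alpha/2}u(i)$, and then simply applies Cauchy--Schwarz $\bE[U_{i_1}U_{i_2}]\le \bE[U_{i_1}^2]^{1/2}\bE[U_{i_2}^2]^{1/2}$ for the at most $r^{\alpha/4}$ close neighbors of each $i_1$. This yields $\sum_{i_1,i_2}\bE[U_{i_1}U_{i_2}]\le C r^{7\alpha/4}$ and a final rate $r^{-\alpha/4}(\log r)^{-2}$. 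Your sharper estimates $\bE[(T-1)^2]=O(1)$ and $|\bE[\delta_d(T-1)(T^{(d)}-1)]|\le C\,u(d)$ give the better rate $r^{-\alpha(1-\alpha/4)}$, at the cost of the case analysis on the ordering of $\{j,d,d+k\}$ (incidentally, since $d<d+k$ always, there are only three generic orderings plus the two coincidences $j\in\{d,d+k\}$, not six). Both routes are valid; the paper's is shorter, yours is sharper.
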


\begin{proof}

We have 
\begin{equation}
  r^{\alpha}  \left( \sum_{k=1}^{r^{\alpha/4}} k^{-\alpha} u(k) \right) (X^2_n-X^1_n)=
   \sum_{i=1}^{r}\delta_i \left( \sum_{j=1}^{r^{\alpha/4}} j^{-\alpha}\left(\delta_{i+j}-u(j)\right)\right)=:
     \sum_{i=1}^{r}U_i.
\end{equation}
Hence we have 
\begin{equation}\label{house}
 \bE \left[ (X^2_n-X^1_n)^2 \right]\le C r^{-2\alpha} (\log r)^{-2} \sum_{i_1,i_2=1}^r \bE[U_{i_1}U_{i_2}].
\end{equation}
We are going to show that we have  
\begin{equation}\label{lahousse}
\bE[U^2_{i}] \le C r^{\alpha/2}i^{\alpha-1}.
\end{equation}
and 
\begin{equation}\label{2lahousse}
|i_1-i_2|\ge r^{\alpha/4} \quad \Rightarrow \quad \bE[U_{i_1}U_{i_2}]=0, 
\end{equation}
Using these estimates we obtain that 
\begin{multline}
 \sum_{i_1,i_2=1}^r \bE[U_{i_1}U_{i_2}]\le \sum_{i=1}^r\bE[U^2_{i}]+ 2 \sum_{i_1=1}^r\sum_{i_2=(i_1+1)}^{\max(r,i_1+r^{\alpha/4})} \bE[U_{i_1}U_{i_2}]\\
 \le (1+2r^{\alpha/4})\sum_{i=1}^r\bE[U^2_{i}]\le C r^{3\alpha/4})\sum_{i=1}^r i^{\alpha-1}\le C r^{7\alpha/4}, 
 \end{multline}
which in regards of \eqref{house} allows to conclude.

\medskip

\noindent The inequality \eqref{lahousse} is simple to obtain. We have 
$$\left|\sum_{j=1}^{r^{\alpha/4}} j^{-\alpha}\left(\delta_{i+j}-u(j)\right)\right|\le r^{\alpha/4},$$
and hence 
$$\bE[U^2_{i}]\le r^{\alpha/2}\bE[\delta_i]\le  C  r^{\alpha/2} i^{\alpha-1}.$$
For \eqref{2lahousse}, we assume that $i_1$ is the smallest index.  Note that with the assumption $i_2-i_1\ge r^{\alpha/4}$, 
$U_{i_1}$ is measurable w.r.t.\ $\tau \cap  [0,i_2]$. Hence we have 
\begin{multline}
\bE[U_{i_1}U_{i_2} \ | \ \tau \cap  [0,i_2]]=
U_{i_1}\delta_{i_2} \sum_{j=1}^{r^{\alpha/4}} j^{-\alpha} \bE\left[ \delta_{i_2+j}-u(j) \ | \ \tau \cap  [0,i_2] \right]\\
= U_{i_1}\delta_{i_2}  \sum_{j=1}^{r^{\alpha/4}} j^{-\alpha}  \bE\left[\delta_{i_2+j}-u(j) \ | \ i_2\in \tau \right]=0.
\end{multline}
To obtain the second equality, we observe that both terms are equal to zero if  $i_2\notin \tau$ and that conditionally to $i_2\in \tau$,
$\tau\cap[0,i_2]$ and $\tau\cap[i_2,\infty)$ are independent.

\end{proof}

\end{document}